\documentclass[12pt]{amsart}
\textwidth 15.5cm \textheight 19cm
\oddsidemargin 0cm \evensidemargin 0cm \topmargin 0cm
\usepackage{amsfonts, amssymb, amsmath, amsthm}

\usepackage{color}
\usepackage{enumerate}
\usepackage{graphicx}
\usepackage{tikz-cd}

\mathchardef\ordinarycolon\mathcode`\:
\mathcode`\:=\string"8000
\begingroup \catcode`\:=\active
  \gdef:{\mathrel{\mathop\ordinarycolon}}
\endgroup

\newcommand{\xydr}{\ar@<-.5ex>[r] \ar@<.5ex>[r]}
\newcommand{\xydd}{\ar@<-.5ex>[d] \ar@<.5ex>[d]}

\newcommand{\jtower}[4]{
\xymatrix@R=1em{
#1_{1}  \ar[d]^{#2_{1}} & #1_{2} \ar[d]_{#2_{2}} \\
#1_{1} \ar[d] & #2_{2} \ar[d] \\
#3_{1} \ar[r]^{#4} & #3_{2} \\
}
}

\DeclareMathOperator{\Aut}{Aut}

\numberwithin{equation}{section}
\newtheorem{theorem}[equation]{Theorem}
\newtheorem{proposition}[equation]{Proposition}

\newtheorem{question}[equation]{Question}

\newtheorem{corollary}[equation]{Corollary}
\newtheorem{lemma}[equation]{Lemma}

\theoremstyle{definition}

\newtheorem{definition}[equation]{Definition}

\theoremstyle{remark}
\newtheorem{remark}[equation]{Remark}

                            {\end{enumerate}}
\input xy
\xyoption{all}
\numberwithin{equation}{section}
\begin{document}

\keywords{automorphism, Lyapunov exponents, entropy, shift of finite type.}
\subjclass[2010]{Primary 37B10}
\thanks{This research was supported in part by the National Science Foundation grant ``RTG: Analysis on manifolds'' at Northwestern University}

\title[Lyapunov exponents and entropy for automorphisms of the shift]{Automorphisms of the shift: Lyapunov Exponents, entropy, and the dimension representation}
\author{Scott Schmieding}
\address{Northwestern University}
\email{schmiedi@math.northwestern.edu}

\begin{abstract} Let $(X_{A},\sigma_{A})$ be a shift of finite type and $\Aut(\sigma_{A})$ its corresponding automorphism group. Associated to $\phi \in \Aut(\sigma_{A})$ are certain Lyapunov exponents $\alpha^{-}(\phi), \alpha^{+}(\phi)$ which describe asymptotic behavior of the sequence of coding ranges of $\phi^{n}$. We give lower bounds on $\alpha^{-}(\phi), \alpha^{+}(\phi)$ in terms of the spectral radius of the corresponding action of $\phi$ on the dimension group associated to $(X_{A},\sigma_{A})$. We also give lower bounds on the topological entropy $h_{top}(\phi)$ in terms of a distinguished part of the spectrum of the action of $\phi$ on the dimension group, but show that in general $h_{top}(\phi)$ is not bounded below by the logarithm of the spectral radius of the action of $\phi$ on the dimension group.
\end{abstract}
\maketitle

\tableofcontents

\section{Introduction}
By a subshift $(X,\sigma)$ we mean a closed shift-invariant subset $X \subset \mathcal{A}^{\mathbb{Z}}$, where $\mathcal{A}$ is a finite alphabet, together with the shift map $\sigma \colon X \to X$. An automorphism of $(X,\sigma)$ is a homeomorphism $\phi \colon X \to X$ such that $\phi \sigma = \sigma \phi$, and we let $\Aut (\sigma)$ denote the group of automorphisms of $(X,\sigma)$. By the Curtis-Hedlund-Lyndon Theorem (see \cite[Sec. 1.5]{LindMarcus1995}), any automorphism $\phi \in \Aut(\sigma)$ is given by a block code of finite range, and $\Aut(\sigma)$ is thus countable. When $(X,\sigma)$ is a mixing shift of finite type, $\Aut(\sigma)$ is known to contain a large assortment of subgroups: in this case $\Aut(\sigma)$ contains, for example, (isomorphic copies of) the direct sum of countable many copies of $\mathbb{Z}$, the free group on two generators, and any finite group \cite{BLR88}. In contrast, recent work (e.g. \cite{CyrKraBeyondTransitivity}, \cite{CyrKraStretchedExponential}, \cite{CyrKraSubquadratic}, \cite{DDMP2016}) shows that, in cases where the shift is of low complexity, the structure of the automorphism groups can be much more restricted. \\
\indent The analysis of certain types of distortion occurring in $\Aut(\sigma)$ was undertaken in \cite{CFKSpacetime} and \cite{CFKPDistortion}.
With an eye toward studying a given individual element $\phi \in \Aut(\sigma)$, one may consider the sequence of sizes of smallest possible coding ranges for $\phi^{n}$. When this sequence grows sublinearly, the automorphism $\phi$ is called \emph{range distorted}, a term introduced in \cite[Def. 5.8]{CFKSpacetime}. More generally, asymptotic information about this sequence is captured by certain Lyapunov exponents $\alpha^{-}(\phi),\alpha^{+}(\phi)$ (defined in \ref{def:lyaps} below), which were also studied in \cite{CFKSpacetime} and \cite{NasuDegreesResolving}. The automorphism $\phi$ is range distorted precisely when $\alpha^{-}(\phi) = \alpha^{+}(\phi) = 0$.
(We note that \cite{CFKPDistortion} also considers an alternative notion of distortion, which is more classical and group theoretic in nature; the relationship between the two is discussed in Section 5 below.) The quantities $\alpha^{-}, \alpha^{+}$ have been previously studied in \cite{Shereshevsky1992} and \cite{Tisseur2000} from the point of view of measure-preserving cellular automata. Similar quantities have also been examined in \cite{PacificoVieitez}, in the more general context of expansive homeomorphisms of a compact metric space.\\
\indent Automorphisms of finite order are automatically range distorted. However, there exist range distorted automorphisms of infinite order. Recently, Guillon and Salo in \cite{GS2017} have given constructions to produce a vast collection of such infinite order range distorted automorphisms of transitive subshifts, based on the concept of aperiodic one-head machines. \\
\indent We consider here the case where $(X,\sigma)$ is an irreducible shift of finite type. For such systems, there is a homomorphism $\pi \colon \Aut(\sigma) \to \Aut(\mathcal{G},\mathcal{G}^{+},\delta)$ to automorphisms of a certain dimension triple (defined in Section \ref{sec:dimensiongroupdef}). An automorphism $\pi(\phi)$ extends in a natural way to an automorphism of a finite dimensional vector space, allowing one to study an automorphism $\phi$ through its associated linear map. In this paper we use this approach to study automorphisms of shifts of finite type, and examine connections between their dimension representation, Lyapunov exponents, entropy, and distortion.\\
\indent An outline of the paper is as follows. Section \ref{sec:dimensiongroupdef} provides relevant background on the dimension triple and the dimension representation associated to a shift of finite type.\\
\indent Section \ref{sec:entropythings} considers relations between entropy and the dimension representation. The section's main result (Theorem \ref{thm:perronentropybound} in the text) shows that the topological entropy of $\phi$ is bounded below by $\log \lambda_{\phi}$, where $\lambda_{\phi}$ is a certain distinguished eigenvalue of the linear map $\pi(\phi)$. This eigenvalue $\lambda_{\phi}$ can be interpreted as the value by which $\phi$ scales a canonical (up to a scalar) family of $\sigma$-finite measures on unstable sets of the system $(X_{A},\sigma_{A})$. Analogous to Shub's classical entropy conjecture \cite{Shub1974} (additionally, see \cite{KatokECRussian}), a natural question is whether $h_{top}(\phi)$ is always bounded below by the logarithm of the spectral radius of $\pi(\phi)$. We show that, in general, this is false, and construct examples where such a bound does not hold. \\
\indent Section \ref{sec:lyapunovbois} discusses Lyapunov exponents and their connection with the dimension representation. The section's main result (Theorem \ref{thm:main} in the text) gives lower bounds on the Lyapunov exponents $\alpha^{-}(\phi), \alpha^{+}(\phi)$ in terms of the spectral radius of the induced linear map $\pi(\phi)$. As a consequence, we prove that if both $\phi$ and $\phi^{-1}$ are range distorted, then the spectrum of the linear map $\pi(\phi)$ must lie on the unit circle.\\
\indent In Section 5 we briefly discuss how group distorted elements behave with respect to the dimension representation.\\
\indent We also highlight three open questions (Questions \ref{question:maxentropybound}, \ref{question:zepowerinert}, and \ref{question:rdpowerinert} in the text).\\
\indent I would like to thank Ville Salo for communicating to us the example in Remark \ref{remark:connectionremark}, and thank Masakazu Nasu for referring me to results contained in his article \cite{NasuDegreesResolving}. I am especially thankful to Mike Boyle for many helpful discussions and comments.\\

For a square matrix $A$ over $\mathbb{Z}_{+}$ we let $(X_{A},\sigma_{A})$ denote the edge shift of finite type associated to $A$. \emph{Throughout, we will assume that $A$ is irreducible, so $(X_{A},\sigma_{A})$ is an irreducible shift of finite type. We will also assume that $(X_{A},\sigma_{A})$ has positive entropy.}


\section{The Dimension Representation}\label{sec:dimensiongroupdef}
Let $A$ be a $k \times k$ square matrix over $\mathbb{Z}_{+}$. We let $R(A)$ denote the eventual range subspace $\mathbb{Q}^{k}A^{k} \subset \mathbb{Q}^{k}$ (throughout we will always assume matrices to act on row vectors). The \emph{dimension triple $(\mathcal{G}_{A},\mathcal{G}_{A}^{+},\delta_{A})$ associated to $A$} consists of the abelian group $\mathcal{G}_{A}$, the semi-group $\mathcal{G}_{A}^{+} \subset \mathcal{G}_{A}$, and the automorphism $\delta_{A}$ of $\mathcal{G}_{A}$, where
\begin{enumerate}
\item
$\mathcal{G}_{A} = \{ x \in R(A) \mid x A^{k} \in \mathbb{Z}^{k} \textnormal{ for some }k \ge 0\}$.
\item
$\mathcal{G}_{A}^{+} = \{x \in R(A) \mid x A^{k} \in (\mathbb{Z}_{+})^{k} \textnormal{ for some } k\ge 0\}$.
\item
$\delta_{A}(x) = x A$.
\end{enumerate}
\indent While the definition of $(\mathcal{G}_{A},\mathcal{G}_{A}^{+},\delta_{A})$ above relies on the matrix $A$, there is an alternative definition, due to Krieger, which is built more directly from the system $(X_{A},\sigma_{A})$. We will make use of Krieger's presentation, which we now outline. Our presentation and terminology follows that of \cite[Sec. 7.5]{LindMarcus1995}, and for more details, we refer the reader there. Recall we are assuming that $A$ is a $k \times k$ irreducible matrix.\\
\indent By an \emph{$m$-ray} we mean a subset of $X_{A}$ given by
$$R(x,m) = \{y \in X_{A} \mid y_{(-\infty,m]}=x_{(-\infty,m]}\}$$
for some $x \in X_{A}, m \in \mathbb{Z}$. An \emph{$m$-beam} is a finite union of $m$-rays. By a \emph{ray} (\emph{beam}) we mean an $m$-ray ($m$-beam) for some $m \in \mathbb{Z}$. We note that if $U$ is an $m$-beam for some $m$, and $n \ge m$, then $U$ is also an $n$-beam. Given an $m$-beam
$$U = \bigcup_{i=1}^{j}R(x^{(i)},m),$$
we let $v_{U,m} \in \mathbb{Z}^{k}$ denote the vector whose $J$th component is given by
$$\#\{x^{(i)} \in U \mid \textnormal{ the edge corresponding to }x_{m}^{(i)} \textnormal{ ends at state } J\}.$$
We define beams $U$ and $V$ to be equivalent if there exists $m$ such that $v_{U,m} = v_{V,m}$, and let $[U]$ denote the equivalence class of a beam $U$. Since $A$ is irreducible and $0 < h_{top}(\sigma_{A}) = \log \lambda_{A}$, given beams $U,V$, one may always find beams $U^{\prime}, V^{\prime}$ such that
$$[U]=[U^{\prime}], \hspace{.23in} [V] = [V^{\prime}], \hspace{.23in} U^{\prime} \cap V^{\prime} = \emptyset,$$
and we let $D_{A}^{+}$ denote the abelian semi-group defined by the operation
$$[U] + [V] = [U^{\prime} \cup V^{\prime}].$$
Letting $D_{A}$ denote the group completion of $D_{A}^{+}$ (so elements of $D_{A}$ are formal differences $[U]-[V]$), the map $d_{A} \colon D_{A} \to D_{A}$ induced by
$$d_{A}([U]) = [\sigma_{A}(U)]$$
is a group automorphism of $D_{A}$, and we arrive at Krieger's dimension triple $(D_{A},D_{A}^{+},d_{A})$.

An automorphism $\phi \in \Aut (\sigma_{A})$ induces an automorphism $\phi_{*} \colon (D_{A},D_{A}^{+},d_{A}) \to (D_{A},D_{A}^{+},d_{A})$ by
$$\phi_{*}([U]) = [\phi(U)].$$
Here and in what follows, by a morphism of a triple we mean a morphism preserving all the relevant data. For example, by an automorphism $\Phi \in \Aut(\mathcal{G}_{A},\mathcal{G}_{A}^{+},\delta_{A})$ we mean a group automorphism $\Phi \colon \mathcal{G}_{A} \to \mathcal{G}_{A}$ taking $\mathcal{G}_{A}^{+}$ onto $\mathcal{G}_{A}^{+}$ such that $\Phi \delta_{A} = \delta_{A} \Phi$.\\
\indent Finally, there is a semi-group homomorphism $\theta \colon D_{A}^{+} \to \mathcal{G}_{A}^{+}$ induced by the map
$$\theta([U]) = \delta_{A}^{-k-n}(v_{U,n}A^{k}), \hspace{.29in} U \textnormal{ an } n\textnormal{-beam}.$$
\begin{proposition}[\cite{LindMarcus1995}, Theorem 7.5.3]\label{prop:fernus}
The map $\theta \colon D_{A}^{+} \to \mathcal{G}_{A}^{+}$ satisfies $\theta(D_{A}^{+}) = \mathcal{G}_{A}^{+}$, and induces an isomorphism $\theta \colon D_{A} \to \mathcal{G}_{A}$ such that $\theta \circ d_{A} = \delta_{A} \circ \theta$. Thus $\theta$ induces an isomorphism of triples
$$\theta \colon (D_{A},D_{A}^{+},d_{A}) \to (\mathcal{G}_{A},\mathcal{G}_{A}^{+},\delta_{A}).$$
\end{proposition}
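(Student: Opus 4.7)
The plan is to verify that $\theta$ is a well-defined additive semigroup homomorphism into $\mathcal{G}_A^+$ intertwining $d_A$ and $\delta_A$, that it is surjective onto $\mathcal{G}_A^+$, and that it is injective on $D_A^+$. The isomorphism of triples then follows by Grothendieck completion.

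For well-definedness, the first step is to show that the formula $\theta([U]) = \delta_A^{-k-n}(v_{U,n} A^k)$ does not depend on the choice of $n$. If $U$ is an $n$-beam, each $n$-ray $R(x,n)$ decomposes as a disjoint union $\bigsqcup_{e} R(y, n+1)$ over one-step forward extensions of $x_n$ by an admissible edge $e$; counting gives $v_{U, n+1} = v_{U,n} A$, and substitution returns the same value. Combined with the definition of equivalence of beams, this yields well-definedness on $D_A^+$. The image lies in $\mathcal{G}_A^+$, since $\delta_A^{k+n}$ applied to $\theta([U])$ returns $v_{U,n} A^k \in \mathbb{Z}_+^k$. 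Additivity holds after choosing disjoint representatives at a common level, and $\theta \circ d_A = \delta_A \circ \theta$ follows from the identity $v_{\sigma_A(U),\, n-1} = v_{U,n}$, which holds because the shift simply re-indexes edges.

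The key point is surjectivity. Given $g \in \mathcal{G}_A^+$, choose $n \ge 0$ such that $v := gA^n$ lies in $\mathbb{Z}_+^k$, and then construct an $n$-beam $U$ with $v_{U,n} = v$ by selecting, for each state $J$, $v_J$ distinct $n$-rays whose final edge terminates at $J$. The standing hypotheses (irreducibility and positive entropy) guarantee infinitely many infinite backward paths ending at any prescribed state, so the choice is possible. Then $\theta([U]) = \delta_A^{-k-n}(vA^k) = g$. I expect this surjectivity step to be the main obstacle, since it is the only place where the hypotheses on $A$ enter essentially.

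For injectivity on $D_A^+$, if $\theta([U]) = \theta([V])$ with both $n$-beams, then $v_{U,n} A^k = v_{V,n} A^k$ in $\mathbb{Z}^k$, i.e.\ $v_{U,\, n+k} = v_{V,\, n+k}$, so $[U] = [V]$. At this point $\theta : D_A^+ \to \mathcal{G}_A^+$ is a bijective, additive, $d_A$-equivariant homomorphism. Cancellation in $\mathcal{G}_A^+$ (inherited from the ambient group $\mathcal{G}_A$) transfers back to $D_A^+$ via the bijection, so the Grothendieck extension $\theta : D_A \to \mathcal{G}_A$ is injective. Every $g \in \mathcal{G}_A$ admits $gA^j \in \mathbb{Z}^k$ for some $j$, which can be written as a difference of non-negative integer vectors, placing $g$ in the image; this yields the isomorphism of triples.
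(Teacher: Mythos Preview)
The paper does not supply its own proof of this proposition; it is quoted as Theorem~7.5.3 of Lind--Marcus and used as a black box. Your argument is correct and is essentially the standard proof found in that reference: check well-definedness via the relation $v_{U,n+1}=v_{U,n}A$, verify the intertwining $\theta\circ d_A=\delta_A\circ\theta$ from $v_{\sigma_A(U),n-1}=v_{U,n}$, realize any $v\in\mathbb{Z}_+^k$ as $v_{U,n}$ for a suitable beam (this is where irreducibility and positive entropy are used, to guarantee infinitely many distinct left-infinite paths terminating at each state), deduce injectivity by pushing forward $k$ steps, and pass to the group completion. There is nothing to compare against in the present paper.
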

For $\phi \in \Aut(\sigma_{A})$ we let $S_{\phi} \colon (\mathcal{G}_{A},\mathcal{G}_{A}^{+},\delta_{A}) \to (\mathcal{G}_{A},\mathcal{G}_{A}^{+},\delta_{A})$ denote the automorphism of the dimension triple for which the diagram
\begin{equation}
\begin{aligned}\label{diagram}
\xymatrix{
D_{A} \ar^{\theta}[r] \ar_{\phi_{*}}[d] & \mathcal{G}_{A} \ar^{S_{\phi}}[d]\\
D_{A} \ar^{\theta}[r] & \mathcal{G}_{A} \\
}
\end{aligned}
\end{equation}
commutes. We can now define the dimension representation by
\begin{gather}\label{def:dimrep}
\pi_{A} \colon \Aut(\sigma_{A}) \to \Aut(\mathcal{G}_{A},\mathcal{G}_{A}^{+},\delta_{A})\\
\pi_{A} \colon \phi \mapsto S_{\phi}.
\end{gather}
An automorphism $\phi \in \Aut(\sigma_{A})$ is called \emph{inert} if it is in the kernel of the dimension representation of $\Aut(\sigma_{A})$.\\

From the linear algebra point of view, there is a rather concrete interpretation of the map $\pi_{A}$. Considering $\mathcal{G}_{A}$ as a subgroup of the rational vector space $R(A)$, the automorphism $\pi_{A}(\phi) = S_{\phi}$ extends uniquely to a linear automorphism $S_{\phi,\mathbb{Q}} \colon R(A) \to R(A)$ which preserves $\mathcal{G}_{A}$. The map $S_{\sigma_{A},\mathbb{Q}} \colon R(A) \to R(A)$ is given by $x \mapsto xA$, and the linear maps $S_{\phi,\mathbb{Q}}, S_{\sigma_{A},\mathbb{Q}}$ commute.
\section{Entropy and the dimension representation}\label{sec:entropythings}
In this section we discuss some aspects of the relationship between the entropy of an automorphism $\phi$ of an SFT $(X_{A},\sigma_{A})$ and its action on the associated dimension group $(\mathcal{G}_{A},\mathcal{G}_{A}^{+})$. The two main results are Theorem \ref{thm:perronentropybound}, and a construction of examples showing that a certain entropy conjecture does not hold in general. In short, Theorem \ref{thm:perronentropybound} is a positive result, giving lower bounds on the entropy of an automorphism $\phi$ in terms of a particular component of the spectrum (denoted by $\lambda_{\phi}$) of its action on the dimension group. This distinguished eigenvalue $\lambda_{\phi}$ has dynamical interpretations, which we discuss. Following this is a construction which gives examples showing this can not be strengthened to bound the entropy below by the logarithm of the entire spectral radius of the action on the dimension group; in particular, a stronger form of Theorem \ref{thm:perronentropybound}, which is analogous Shub's classical entropy conjecture, does not hold in general. \\
\indent Throughout, we will use the same notation as above: $(X_{A},\sigma_{A})$ denotes an irreducible shift of finite type (which we will assume has positive entropy), we let $\pi_{A} \colon \Aut(\sigma_{A}) \to \Aut(\mathcal{G}_{A},\mathcal{G}_{A}^{+})$ denote the associated dimension representation, and $S_{\phi} = \pi_{A}(\phi)$ the image of $\phi$, so that $S_{\phi} \otimes 1 \colon \mathcal{G}_{A} \otimes \mathbb{C} \to \mathcal{G}_{A} \otimes \mathbb{C}$. \\

We first outline the relevant background regarding measures on unstable sets and the quantity $\lambda_{\phi}$ which appears in Theorem \ref{thm:perronentropybound} below. The content here follows closely that of \cite[Sec. 3]{BoyleDegree}, and we refer the reader there for details and proofs.\\

\indent For a point $x \in X_{A}$ we define the unstable set of $x$ to be
$$W^{u}(x) = \bigcup_{n \in \mathbb{Z}}R(x,n).$$
We equip $W^{u}(x)$ with a topology by using the collection
$$\{R(y,m) \mid y \in W^{u}(x), m \in \mathbb{Z}\}$$
as a basis, and with this topology the space $W^{u}(x)$ becomes $\sigma$-compact. \\
\indent Let $\lambda_{A}$ denote the Perron-Frobenius eigenvalue for $A$, and choose a right eigenvector $v_{r}$ for $\lambda_{A}$. We define on each $W^{u}(x)$ a $\sigma$-finite Borel measure $\mu_{u}^{x}$ by
$$\mu_{u}^{x}(R(y,m)) = \lambda_{A}^{-m}v_{r}(t(y_{m}))$$
where $t(y_{m})$ denotes the state at which the edge $y_{m}$ ends.

\indent This collection of measures $\{\mu_{u}^{x}\}$ satisfies the following: for any $x,y \in X_{A}$ and $m \in \mathbb{Z}$
\begin{enumerate}
\item
$\mu_{u}^{x}(R(x,m)) = \lambda_{A}^{-1}\mu_{u}^{\sigma_{A}(x)}(R(\sigma_{A}(x),m-1))$.
\item
There exists $N \in \mathbb{N}$ such that if $x_{[0,N]} = y_{[0,N]}$ then $\mu_{u}^{x}(R(x,N)) = \mu_{u}^{x}(R(y,N))$.
\end{enumerate}
While the collection $\{\mu_{u}^{x}\}$ is not unique, any other such collection $\{\nu^{x}_{u}\}$ satisfies ${\nu^{x}_{u} = K \mu^{x}_{u}}$ for some universal scalar $K$ (see \cite[Prop. 3.2]{BoyleDegree}).\\

Let us fix once and for all such a collection $\mu_{u} = \{\mu_{u}^{x}\}$. There is then a corresponding state on the dimension group, i.e. a group homomorphism
$$\tau_{\mu_{u}} \colon D_{A} \to \mathbb{R}, \hspace{.23in} \tau_{\mu_{u}}(D_{A}^{+}) \subset \mathbb{R}_{+}$$
induced by defining
\begin{equation}\label{eqn:tracedefinition}
\tau_{\mu_{u}} \colon D_{A}^{+} \to \mathbb{R}, \hspace{.23in} \tau_{\mu_{u}}(R(x,n)) = \mu_{u}^{x}(R(x,n)).
\end{equation}

Since $\{\mu_{u}^{x}\}$ is unique up to a universal scalar multiple, $\tau_{\mu_{u}}$ satisfies the following property: for any $\phi \in \Aut(\sigma_{A})$, there exists $\lambda_{\phi} > 0$ satisfying
$$\tau(\phi(v)) = \lambda_{\phi} \tau(v), \hspace{.07in} \textnormal{ for any } v \in \mathcal{G}_{A}.$$
We can then define a homomorphism
\begin{equation}\label{eqn:autotracemap}
\Psi \colon \Aut(\sigma_{A}) \to \mathbb{R}_{+},  \hspace{.17in} \Psi \colon \phi \mapsto \lambda_{\phi}
\end{equation}
where $\mathbb{R}_{+}$ is the set of positive reals considered as a group under multiplication. When $\textnormal{det}(I-tA)$ is irreducible, the map $\Psi$ is injective (one proof of this can be found in Cor. 5.11 of \cite{BMT1987}. We refer the reader to \cite{BoyleKrieger1987} for more details on the map $\Psi$.)

\indent The quantity $\lambda_{\phi}$ has various interpretations, and two fundamental perspectives on $\lambda_{\phi}$ are worth briefly discussing. (We note that these two views of $\lambda_{\phi}$ follow somewhat analogously the two ways we defined the dimension group itself in Section \ref{sec:dimensiongroupdef}, i.e. internally, via Krieger's definition, or through the matrix approach.) From the internal point of view, the quantity $\lambda_{\phi}$ is determined by how the automorphism $\phi \in \Aut(\sigma_{A})$ multiplies a choice of coherent measures on unstable sets. On the other hand, from the matrix point of view, $\lambda_{\phi}$ has a rather explicit interpretation: letting $\lambda_{A}$ denote the Perron-Frobenius eigenvalue for $A$, for an eigenvector $v \in \mathcal{G}_{A} \otimes \mathbb{C}$ such that $(\delta_{A} \otimes 1)v = \lambda_{A} v$, we have $(S_{\phi} \otimes 1)v = \lambda_{\phi}v$. 

\subsection{Lower bound on entropy}\label{sec:entropybound}

\indent For an automorphism $\phi \in \Aut(\sigma_{A})$, the quantity $\lambda_{\phi}$ is an eigenvalue of $S_{\phi}$. The following shows that the logarithm of this distinguished eigenvalue always bounds the topological entropy of $\phi$ from below.
\begin{theorem}\label{thm:perronentropybound}
Let $(X_{A},\sigma_{A})$ be an irreducible shift of finite type, and let $\phi \in \Aut(\sigma_{A})$. Then
\begin{equation}\label{eqn:entropybound}
\log\lambda_{\phi} \le h_{top}(\phi).
\end{equation}
\end{theorem}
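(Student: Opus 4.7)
The plan is to combine the variational principle with a Pesin/Rokhlin-type computation of the unstable Jacobian of $\phi$ with respect to the measure of maximal entropy. Let $\mu_P$ denote the unique Borel probability measure of maximal entropy on $(X_A,\sigma_A)$; its existence and uniqueness is classical for an irreducible SFT. Because $\phi$ commutes with $\sigma_A$, the push-forward $\phi_*\mu_P$ is another $\sigma_A$-invariant Borel probability of maximal entropy, and by uniqueness $\phi_*\mu_P=\mu_P$. Applying the variational principle to the homeomorphism $\phi$ gives $h_{top}(\phi)\ge h_{\mu_P}(\phi)$, so the theorem reduces to the inequality $h_{\mu_P}(\phi)\ge\log\lambda_\phi$.

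The central observation is that $\lambda_\phi$ is precisely the unstable Jacobian of $\phi$ with respect to $\mu_P$. On a sufficiently small Markov rectangle the Parry measure $\mu_P$ decomposes as a product of a stable conditional (built from a left Perron eigenvector of $A$) and an unstable conditional proportional to the restriction of the measure $\mu_u^x$ from Section \ref{sec:entropythings}. Since $\phi$ sends $W^u(x)$ to $W^u(\phi x)$ and scales each $\mu_u^x$ by $\lambda_\phi$ (by the very definition of $\lambda_\phi$), and since $\phi_*\mu_P=\mu_P$, the stable conditional must be contracted by $\lambda_\phi^{-1}$. In particular, $\phi$ has constant unstable Jacobian equal to $\lambda_\phi$ with respect to $\mu_P$.

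To convert the unstable Jacobian into the needed lower bound on entropy, I would choose a measurable partition $\xi$ subordinate to unstable manifolds such that the $\phi$-iterates of $\xi$ generate the Borel $\sigma$-algebra of $X_A$. A Ledrappier--Young / Pesin-type conditional entropy formula, which in this one-dimensional-unstable setting reduces to a Rokhlin identity, then yields
$$h_{\mu_P}(\phi)\;\ge\;\int\log\lambda_\phi\,d\mu_P\;=\;\log\lambda_\phi,$$
completing the argument.

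The main technical obstacle is the construction of the Rokhlin partition subordinate to unstable manifolds in the SFT setting, together with the justification of the conditional entropy formula: the unstable sets $W^u(x)$ carry a $\sigma$-compact, non-Cantor topology, and the $\mu_u^x$ are only $\sigma$-finite, so the arguments, while routine in smooth hyperbolic dynamics, require careful symbolic-dynamical adaptation. As a possible alternative, one could try to construct $(n,\epsilon)$-separated sets directly from the measure growth $\mu_u(\phi^n R(x,0))=\lambda_\phi^n\mu_u(R(x,0))$, which forces $\phi^n R(x,0)$ to split into at least $\gtrsim\lambda_A^M\lambda_\phi^n$ rays at any refinement level $M$; however, realizing the resulting separation at coordinates inside the bounded window $[-M,M]$ required by the ambient Cantor metric on $X_A$ (rather than merely somewhere in $(-\infty,M]$) is the delicate point, and would likely require composing with an appropriate power of $\sigma_A$.
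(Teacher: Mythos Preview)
Your main line of argument would prove the stronger inequality $h_{\mu_{\sigma_A}}(\phi)\ge\log\lambda_\phi$, which the paper explicitly poses as an open question (Question~\ref{question:maxentropybound}). So either you have resolved that question, or there is a genuine gap; it is the latter.

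The gap is in the sentence ``choose a measurable partition $\xi$ subordinate to unstable manifolds such that the $\phi$-iterates of $\xi$ generate''. The unstable sets $W^u(x)$ here are unstable \emph{for $\sigma_A$}, not for $\phi$. The Pesin/Ledrappier--Young/Rokhlin machinery requires a partition subordinate to the unstable lamination \emph{of the map whose entropy you are computing}, precisely because that is what guarantees the refinement relation $\phi^{-1}\xi\ge\xi$ and, ultimately, generation. For a general $\phi\in\Aut(\sigma_A)$ there is no such relation: $\phi$ maps $\sigma_A$-rays to $\sigma_A$-beams of possibly larger or smaller depth, and nothing forces $\bigvee_{n\ge0}\phi^{-n}\xi$ to be the full $\sigma$-algebra. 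The paper's own example following Question~\ref{question:maxentropybound} exhibits an automorphism of a full shift with $h_{\mu_P}(\phi)=0$; for that $\phi$ no partition can generate with positive conditional information, so no choice of $\xi$ will make the Rokhlin step go through. The identification of $\lambda_\phi$ as an ``unstable Jacobian'' is fine as a statement about how $\phi$ rescales the $\sigma_A$-unstable conditionals, but that number has no a priori relation to entropy of $\phi$ unless $\phi$ itself is expanding along those leaves.

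Your parenthetical ``alternative'' is actually the right idea, and is essentially what the paper does. The paper fixes a $0$-ray $R_x$, writes both $R_x$ and $\phi^n(R_x)$ as unions of $k(n)$-rays (with $J(n)$ and $I(n)$ pieces respectively), and uses the state $\tau_{\mu_u}$ and the scaling $\tau_{\mu_u}(\phi^n\,\cdot)=\lambda_\phi^n\tau_{\mu_u}(\cdot)$ to get $K\lambda_\phi^n J(n)\le I(n)$. The key step you were missing is the upper bound: each $k(n)$-ray in $\phi^n(R_x)$ is determined by a $k(n)$-ray of $R_x$ together with the $\phi$-orbit itinerary of a point through a \emph{fixed} window of width $2r+1$ (where $r$ is the coding range of $\phi$). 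The number of such itineraries of length $n$ is at most the number of $(n,\epsilon)$-orbit segments for $\phi$, whose exponential growth rate is $h_{top}(\phi)$. This sidesteps the difficulty you identified about ``realizing separation inside a bounded window'': the bounded window is exactly the coding window of $\phi$, and no composition with powers of $\sigma_A$ is needed.
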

\begin{remark}\label{remark:absperron}
It follows that $\lvert \log \lambda_{\phi} \rvert \le h_{top}(\phi)$, since $h_{top}(\phi) = h_{top}(\phi^{-1})$.
\end{remark}

The inequality \eqref{eqn:entropybound} can be sharp, i.e. in the case where $\phi = \sigma_{A}$. On the other hand, there are cases where \eqref{eqn:entropybound} becomes strict. For example, for any shift of finite type $\sigma_{A}$, consider the automorphism $\sigma_{A} \times \sigma_{A}^{-1}$ of the product system $(X_{A} \times X_{A},\sigma_{A} \times \sigma_{A})$ (note this product system $(X_{A} \times X_{A},\sigma_{A} \times \sigma_{A})$ is topologically conjugate to a shift of finite type). In this case, the left hand side of \eqref{eqn:entropybound} is zero, while the right hand side is $2h_{top}(\sigma_{A})$.\\

We will prove Theorem \ref{thm:perronentropybound} at the end of the section.

\begin{remark} Given $\phi \in \Aut(\sigma_{A})$, by \cite[Theorem 2.17]{BoyleKrieger1987} there exists $k_{0} \in \mathbb{N}$ such that if $k \ge k_{0}$, then $h_{top}(\sigma^{k}\phi) = \log \lambda_{\phi} + k \log \lambda_{A} = \log \lambda_{\sigma^{k} \phi}$. Thus for any automorphism, after composing with a sufficiently high power of the shift, \eqref{eqn:entropybound} becomes equality.
\end{remark}

An irreducible shift of finite type $(X_{A},\sigma_{A})$ has a unique measure of maximal entropy $\mu_{\sigma_{A}}$, 
and any automorphism $\phi \in \Aut(\sigma_{A})$ preserves $\mu_{\sigma_{A}}$ (see \cite{CovenPaul75}).

\begin{question}\label{question:maxentropybound} If $(X_{A},\sigma_{A})$ is an irreducible shift of finite type with measure of maximal entropy $\mu_{\sigma_{A}}$ and $\phi \in \Aut(\sigma_{A})$, does the inequality
\begin{equation}\label{eqn:maxentropybound}
\log (\lambda_{\phi}) \le h_{\mu_{\sigma_{A}}}(\phi)
\end{equation}
hold?
\end{question}
\indent We note that the measures $\mu_{u}^{x}$ are determined (up to some scalar multiple) by the measure of maximal entropy $\mu_{\sigma_{A}}$ (see Remark 3.3 in \cite{BoyleDegree}).\\
\indent A positive answer to Question \ref{question:maxentropybound} would imply Theorem \ref{thm:perronentropybound} by the variational principle. However, Question \ref{question:maxentropybound} asks for something strictly stronger than Theorem \ref{thm:perronentropybound}; indeed, there are automorphisms of shifts of finite type for which the measure of maximal entropy for the automorphism does not coincide with the measure of maximal entropy for the shift. Here is an easy example of such an automorphism (we thank Mike Boyle for communicating this example to us). \\
\indent Let $\mathcal{A} = \{(a,b) \mid a,b \in \{0,1\}\}$, and consider the full shift $(X_{5},\sigma_{5})$ on the alphabet $\mathcal{A} \cup \{c\}$. Define an automorphism $\phi \in \Aut(\sigma_{5})$ by a range one block code
\begin{center}
\begin{tikzcd}[row sep=scriptsize]
{*}c{*} \arrow[mapsto]{d} &  c(a,b)(a^{\prime},b^{\prime})\arrow[mapsto]{d} & (a,b)(a^{\prime},b^{\prime})c \arrow[mapsto]{d} & (a,b)(a^{\prime},b^{\prime})(a^{\prime \prime},b^{\prime \prime}) \arrow[mapsto]{d} \\
c  & (a^{\prime},a) & (b^{\prime},b) & (a^{\prime \prime},b) 
\end{tikzcd}
\end{center}
where each ${*}$ can be any symbol. Let $\mu_{5}$ denote the measure of maximal entropy for $\sigma_{5}$, choose $n \in \mathbb{N}$, and let $P_{n}$ denote the partition of $X_{5}$ into cylinder sets given by words of length $2n+1$ centered at 0. For a $\mu_{5}$-generic point $z$ there exists $i,j > n$ such that $z_{-i}=c, z_{j} = c$, so the $\phi$-itinerary of $z$ through $P_{n}$ is eventually periodic. It follows that $h_{\mu_{5}}(\phi) = 0$. However, if $Y \subset X_{5}$ denotes the set of points which never contain a symbol $c$, then $\phi |_{Y}$ is conjugate to the product of the full 2-shift with its inverse, so $h_{top}(\phi) \ge \log(4)$.\\
\indent Examples where an automorphism and its shift have different measures of maximal entropy need not rely on the appearance of equicontinuity: there are automorphisms which are conjugate to a shift of finite type, but the automorphism and the shift map do not have the same measure of maximal entropy. An explicit example of such an automorphism can be found in \cite[Sec. 10]{Nasu1995}. \\

In some cases, zero entropy implies inertness, as the following shows.
\begin{corollary}\label{cor:entropyzero}
Suppose $(X_{A},\sigma_{A})$ is an irreducible shift of finite type such that the polynomial $\textnormal{det}(I-tA)$ is irreducible. If $\phi \in \Aut(\sigma_{A})$ satisfies $h_{top}(\phi) = 0$, then $\phi$ is inert.
\end{corollary}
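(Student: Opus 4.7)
The plan is to combine the entropy bound in Theorem \ref{thm:perronentropybound} with the injectivity statement (cited after \eqref{eqn:autotracemap} to \cite{BMT1987}) for the map $\phi \mapsto \lambda_\phi$ in the irreducible-polynomial case. First, by Remark \ref{remark:absperron} (equivalently, by applying Theorem \ref{thm:perronentropybound} to both $\phi$ and $\phi^{-1}$, using that $\Psi$ is a homomorphism so $\lambda_{\phi^{-1}} = \lambda_\phi^{-1}$), the hypothesis $h_{top}(\phi) = 0$ forces $|\log \lambda_\phi| \le 0$, hence $\lambda_\phi = 1$. It remains to show that, under irreducibility of $\det(I-tA)$, having $\lambda_\phi = 1$ implies $S_\phi = \Id$.

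Next, I would unpack the irreducibility hypothesis algebraically. The polynomial $\det(I-tA)$ is (up to reciprocation and multiplication by a constant) the characteristic polynomial of $\delta_A = A$ acting on the eventual range $R(A)$, so its irreducibility over $\mathbb{Q}$ means that the minimal polynomial of $A|_{R(A)}$ equals its characteristic polynomial, and both equal this single irreducible factor. Consequently $R(A)$ is a cyclic $\mathbb{Q}[A]$-module, and the subring $K := \mathbb{Q}[A|_{R(A)}] \subset \End_\mathbb{Q}(R(A))$ is a number field; moreover the centralizer of $A|_{R(A)}$ in $\End_\mathbb{Q}(R(A))$ is $K$ itself.

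Finally, the $\mathbb{Q}$-linear extension $S_{\phi,\mathbb{Q}}$ commutes with $A$ on $R(A)$ (as recalled at the end of Section \ref{sec:dimensiongroupdef}), and is an automorphism of $R(A)$, so $S_{\phi,\mathbb{Q}} \in K^{\ast}$. Since $\lambda_A$ is a root of the irreducible characteristic polynomial of $A|_{R(A)}$, the assignment $A|_{R(A)} \mapsto \lambda_A$ extends to a field embedding $\alpha \colon K \hookrightarrow \mathbb{C}$. The matrix interpretation of $\lambda_\phi$ given at the end of Section \ref{sec:dimensiongroupdef}, namely $(S_\phi \otimes 1) v = \lambda_\phi v$ whenever $(\delta_A \otimes 1) v = \lambda_A v$, shows exactly that $\alpha(S_{\phi,\mathbb{Q}}) = \lambda_\phi$. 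Since $\alpha$ is injective and $\alpha(1_K) = 1 = \lambda_\phi$, we conclude $S_{\phi,\mathbb{Q}} = \Id$, so $S_\phi = \Id$, i.e., $\phi$ is inert.

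The only substantive step beyond the entropy bound is the algebraic observation in the second paragraph, and this is what the irreducibility hypothesis on $\det(I-tA)$ is for; equivalently, it is just the injectivity of the induced map $\bar{\Psi}$ on the image of the dimension representation cited in the text after \eqref{eqn:autotracemap}. I do not anticipate any real obstacle.
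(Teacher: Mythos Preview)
Your proposal is correct and follows essentially the same route as the paper: use Theorem~\ref{thm:perronentropybound} (via Remark~\ref{remark:absperron}) to get $\lambda_\phi = 1$, then use the irreducibility of $\det(I-tA)$ to deduce $S_\phi = \Id$. The only difference is that the paper simply cites the injectivity of $\Psi$ from \cite{BMT1987}, whereas you spell out the underlying linear-algebra argument (that $S_{\phi,\mathbb{Q}}$ lies in the number field $\mathbb{Q}[A|_{R(A)}]$ and is detected by the embedding at $\lambda_A$); this is a nice self-contained elaboration rather than a different approach.
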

\begin{proof}
If $h_{top}(\phi) = 0$, then $\lambda_{\phi} = 1$ by Theorem \ref{thm:perronentropybound}. Since $\textnormal{det}(I-tA)$ is irreducible, the map $\Psi$ from \eqref{eqn:autotracemap} is injective, and it follows that $\phi$ is inert.
\end{proof}

There exist mixing shifts of finite type which have non-inert automorphisms of finite order (and hence zero entropy). Such automorphisms are not hard to produce, but here is an explicit example, for completeness. Let $B = \begin{pmatrix} 2 & 1 \\ 1 & 2 \end{pmatrix}$, and let $\phi \in \Aut(\sigma_{B})$ denote an automorphism induced by an order two automorphism of the graph associated to $B$ that swaps the two vertices. Then $\phi$ is order two, and it is not hard to check that for any $0$-ray $R(x,0)$, the image ray $\phi(R(x,0))$ is not equivalent to $R(x,0)$, and hence $\phi$ is not inert. (See \cite{FiebigPeriodic} for constraints on the actions of finite order automorphisms on periodic points.)\\

Given Corollary \ref{cor:entropyzero} and the above example, we pose the following question.

\begin{question}\label{question:zepowerinert}
If $(X_{A},\sigma_{A})$ is an irreducible shift of finite type of positive entropy, and $\phi \in \Aut(\sigma_{A})$ satisfies $h_{top}(\phi) = 0$, must $\phi^{k}$ be inert for some $k \ne 0$?
\end{question}

For the proof of Theorem \ref{thm:perronentropybound}, recall we have chosen a family of measures $\{\mu_{u}^{x}\}$ defined on the collection of unstable sets. We let $\tau_{\mu_{u}} \colon D_{A} \to \mathbb{R}$ denote the corresponding state as defined in \eqref{eqn:tracedefinition}.

\begin{proof}[Proof of Theorem \ref{thm:perronentropybound}]
Fix $n \in \mathbb{N}$. Choose a point $x \in X_{A}$, and consider the $0$-ray $R_{x} = R(x,0)$. There exists $k(n) \in \mathbb{N}$ for which $\phi^{n}(R_{x})$ is a $k(n)$-beam; for instance, Lemma \ref{lemma:rays} implies $k(n) \ge -W^{-}(n,\phi^{-1})$ works. Suppose $\phi^{n}(R_{x})$ is a union of $I(n)$ many $k(n)$-rays $V_{i}$
$$\phi^{n}(R_{x}) = \bigcup_{i=1}^{I(n)} V_{i}.$$
We may also write $R_{x}$ as a $k(n)$-beam, so there exists $J(n) \in \mathbb{N}$ such that $R_{x}$ is a union of $J(n)$ many $k(n)$-rays $W_{j}$
$$R_{x} = \bigcup_{j=1}^{J(n)} W_{j}.$$
Then we have
$$\tau_{\mu_{u}}(\phi^{n}(R_{x})) = \sum_{i=1}^{I(n)}\tau_{\mu_{u}}(V_{i}) = \sum_{i=1}^{I(n)}\lambda_{A}^{-k(n)}K_{i}$$
where, for each $i$, $K_{i}$ is an entry of the eigenvector $v_{r}$. On the other hand, since $\phi$ multiples the measure on unstable sets by $\lambda_{\phi}$, we also have
$$\tau_{\mu_{u}}(\phi^{n}(R_{x})) = \tau_{\mu_{u}}(\bigcup_{j=1}^{J(n)}\phi^{n}(W_{j})) = \sum_{j=1}^{J(n)}\tau_{\mu_{u}}(\phi^{n}(W_{j}))$$
$$=\sum_{j=1}^{J(n)} \lambda_{\phi}^{n}\mu_{u}(W_{j}) = \sum_{j=1}^{J(n)}\lambda_{\phi}^{n}\lambda_{A}^{-k(n)}K_{j}.$$

Thus
$$\sum_{j=1}^{J(n)}\lambda_{\phi}^{n}\lambda_{A}^{-k(n)}K_{j}= \sum_{i=1}^{I(n)}\lambda_{A}^{-k(n)}K_{i}$$
and it follows that there exists $K>0$ independent of $n$ such that
\begin{equation}\label{eqn:entropyineq1}
K \lambda_{\phi}^{n} J(n) \le I(n).
\end{equation}
Let $r$ denote the coding range of $\phi$. For a point $y \in X_{A}$, we can consider the collection of words
$$C^{\phi}_{y,n} = \{w_{i} \mid w_{i} = \phi^{i}(y)_{[k(n),k(n)+2r+1]}, \hspace{.03in} 0 \le i \le n\}.$$
Let $C^{\phi}(n)$ denote the (finite) set of all such collections, ranging over all points $y \in X_{A}$, so
$$C^{\phi}(n) = \{C^{\phi}_{y,n} \mid y \in X_{A} \}.$$
We claim a $k(n)$-ray $V_{i}$ is determined by a choice of $k(n)$-ray $W_{i}$ and a choice of $D \in C^{\phi}(n)$. Since $r$ is the coding range of $\phi$, given a $k(n)$-ray $W_{i}$ and $z \in W_{i}$, the ray $W_{i}$ together with the collection $C^{\phi}_{z,n} \in C^{\phi}(n)$ codes $\phi^{n}(z)|_{(-\infty,k(n)]}$. From this it follows that
$$I(n) \le J(n) \cdot \textnormal{card}(C^{\phi}(n)).$$
This inequality combined with \eqref{eqn:entropyineq1} gives
$$K \lambda_{\phi}^{n} J(n) \le J(n) \cdot \textnormal{card}(C^{\phi}(n))$$
and hence
$$\lambda_{\phi} \le \lim_{n \to \infty} \frac{1}{n} \log \textnormal{card}(C^{\phi}(n)).$$
Finally, note that, in general, we have
$$\lim_{n \to \infty}\frac{1}{n} \log \textnormal{card}(C^{\phi}(n)) \le h_{top}(\phi),$$
so
$$\lambda_{\phi} \le \lim_{n \to \infty} \frac{1}{n} \log \textnormal{card}(C^{\phi}(n)) \le h_{top}(\phi)$$
as desired.

\end{proof}
\subsection{Failure of an entropy conjecture}
In light of Theorem \ref{thm:perronentropybound}, a natural question is the following.\\

\textbf{Question EB (Entropy Bound) : } Let $(X_{A},\sigma_{A})$ be an irreducible shift of finite type, and suppose $\phi \in \Aut(\sigma_{A})$ is an automorphism. Does the inequality
\begin{equation}\label{eqn:entropyquestion}
\log \rho (S_{\phi}) \le h_{top}(\phi)
\end{equation}
hold?\\

We will show below that the answer to Question EB is, in general, \textbf{no}. However, before outlining examples where the inequality \eqref{eqn:entropyquestion} fails, let us take a moment to give some additional motivation for such a question.\\
\indent Question EB is in the spirit of Shub's classical entropy conjecture (\cite{Shub1974}, and \cite{KatokECRussian}). In the classical setting (e.g. Shub's conjecture), the core idea is whether the topological entropy of a diffeomorphism of a compact manifold is bounded below by the logarithm of the spectral radius of the induced action on the associated homology groups. One can ask whether there is some form of an analogous conjecture for automorphisms of a shift space.\\
\indent Of course, there are immediate difficulties in adapting such an entropy conjecture to the realm of shifts of finite type: topologically, $X_{A}$ is a Cantor set, and standard homology theories applied to $(X_{A},\sigma_{A})$ fail to provide meaningful information. However, the dimension group $\mathcal{G}_{A}$ can be considered as a certain homology group associated to the system $(X_{A},\sigma_{A})$, an idea which has been made rigorous, and greatly expanded upon, in Putnam's work on a homology theory for the more general category of Smale spaces \cite{PutnamHomology}. For a shift of finite type $(X_{A},\sigma_{A})$, Putnam's (unstable) homology groups vanish apart from the zeroth degree, where it agrees with $\mathcal{G}_{A}$. Thus for $(X_{A},\sigma_{A})$ there is only one non-zero Putnam homology group on which $\phi \in \Aut(\sigma_{A})$ acts, and this induced action is given precisely by the dimension representation applied to $\phi$. From this point of view, Question EB is rather natural.\\

\textbf{Examples for which EB fails:} We will construct a shift of finite type and automorphism for which \eqref{eqn:entropyquestion} does not hold. In fact, we will give a method with which one may produce many such examples. The idea is to find a primitive matrix over $\mathbb{Z}_{+}$ whose roots have a certain structure, described in detail below. To do this, we will make use of the affirmative answer, proved by Kim, Roush, and Ormes in \cite{KORSpectra}, to the Spectral Conjecture of Boyle and Handelman, in the case where the coefficient ring is $\mathbb{Z}$. We refer the reader to \cite{BHSpectra}, \cite{KORSpectra}, for details regarding this, along with other aspects of the primitive realization result used below. While it is possible (with some trial and error) to produce individual primitive matrices having the properties we want without using the results of \cite{KORSpectra}, we find the more general construction here to be worthwhile.\\
\indent To begin, let us suppose we have a polynomial $p(t) = \prod_{i=1}^{d}(t - \lambda_{i})$ in $\mathbb{Z}[t]$, whose roots $\{\lambda_{i}\}_{i=1}^{d}$ are non-zero and satisfy the following conditions\footnote{Conditions (1) and (2) are those which, according to the Spectral Conjecture of Boyle and Handelman (which is true over $\mathbb{Z}$), are sufficient for $\{\lambda_{i}\}$ to arise as the non-zero spectrum of a primitive matrix over $\mathbb{Z}$.}:
\begin{enumerate}
\item
$\lambda_{d} \in \mathbb{R}$ and $\lambda_{d} > \lvert \lambda_{i} \rvert$ for all $i \ne d$ (so $p(t)$ has a Perron root).
\item
For all $n \ge 1$, we have
$$\sum_{k | n}\left[\mu(\frac{n}{k})\sum_{i=1}^{d}\lambda_{i}^{k}\right] \ge 0,$$
where $\mu$ denotes the M\"obius function.
\item
There exists $k$ such that $\lvert \lambda_{k}^{-1} \rvert > \lambda_{d}$.
\end{enumerate}
For an explicit example of such a polynomial, let $p(t) = t^{3} - 5t^{2} - 6t + 1$.\\

By Theorem 2.2 in \cite{KORSpectra}, since conditions (1) and (2) above are satisfied by the roots of $p(t)$, there exists a primitive matrix $A$ over $\mathbb{Z}_{+}$ such that, for some $m \ge 0$, we have
$$\textnormal{det}(tI-A) = t^{m} \prod_{i=1}^{d}(t-\lambda_{i}).$$
Letting $(X_{A},\sigma_{A})$ denote the mixing shift of finite type associated to the matrix $A$, we have $h_{top}(\sigma_{A}) = \log \lambda_{j}$. Then the automorphism $\sigma_{A}^{-1} \in \Aut(\sigma_{A})$ satisfies
$$\log \rho(S_{\sigma_{A}^{-1}}) = \log \rho(\delta_{A}^{-1}) \ge \log \lvert \lambda_{k}^{-1} \rvert > \log \lambda_{j} = h_{top}(\sigma_{A}) = h_{top}(\sigma_{A}^{-1}).$$

\begin{remark}
It follows, by taking powers of a matrix $A$ produced in the Example above, that for any $R>0$ there exists an SFT $\sigma_{A}$ which contains an automorphism $\phi \in \Aut(\sigma_{A})$ for which
$$\log \rho(S_{\phi}) > h_{top}(\phi) + R.$$
\end{remark}

\section{Lyapunov Exponents}\label{sec:lyapunovbois}
We continue with the notation previously used, so $(X_{A},\sigma_{A})$ denotes an irreducible shift of finite type having positive entropy. Associated to an automorphism ${\phi \in \Aut(\sigma_{A})}$ are Lyapunov exponents $\alpha^{-}(\phi), \alpha^{+}(\phi)$, defined below, which, roughly speaking, measure rates of propagation of information of the automorphism $\phi$. These Lyapunov exponents were examined in the context of cellular automata in \cite{Shereshevsky1992} and \cite{Tisseur2000}, and recently in the more general setting of subshifts in \cite{CFKSpacetime} and \cite{NasuDegreesResolving}. Our treatment here follows more closely that of \cite{CFKSpacetime}. The quantities $\alpha^{-}(\phi), \alpha^{+}(\phi)$ also appear in the context of expansive subspaces as in \cite{BoyleLindExpansive}, and we refer the reader to \cite{CFKSpacetime} for more on this connection.\\
\indent The main result, Theorem \ref{thm:main} below, gives quantitative bounds relating the Lyapunov exponents of $\phi$ to the spectral radius of the action of $\phi$ on $(\mathcal{G}_{A},\mathcal{G}_{A}^{+})$. Though slightly technical in nature, the result places restrictions on the action of a range distorted automorphism on the associated dimension group.\\
\indent We begin by defining the quantities $\alpha^{-}, \alpha^{+}$.\\
\indent For $\phi \in \Aut (\sigma)$, we say $A \subset \mathbb{Z}$ \emph{$\phi$-codes} $B \subset \mathbb{Z}$ if whenever $x,y \in X$ satisfy $x_{i} = y_{i}$ for all $i \in A$, we have $(\phi(x))_{j} = (\phi(y))_{j}$ for all $j \in B$. Consider the sets
\begin{align*}
C^{-}(\phi)& = \{m \in \mathbb{Z} \mid (-\infty,0] \hspace{.07in} \phi \textnormal{-codes} \hspace{.07in} (-\infty,m]\} \\
C^{+}(\phi)& = \{m \in \mathbb{Z} \mid [0,\infty) \hspace{.07in} \phi \textnormal{-codes} \hspace{.07in} [m,\infty)\}.
\end{align*}
By the Curtis-Hedlund-Lyndon Theorem \cite{LindMarcus1995}, $\phi$ is given by a block code of some range, and it follows that both $C^{-}(\phi)$ and $C^{+}(\phi)$ are non-empty. We may then define the quantities
\begin{equation}
\begin{aligned}
W^{-}(n,\phi) = \textnormal{sup }C^{-}(\phi^{n})\\
W^{+}(n,\phi) = \textnormal{inf }C^{+}(\phi^{n}).
\end{aligned}
\end{equation}
In \cite{CFKSpacetime} it is shown that if $X_{A}$ is infinite, then $W^{-}(n,\phi) < \infty$ and $W^{+}(n,\phi) > -\infty$.\\
\indent Asymptotic information about the sequences $W^{-}(n,\phi), W^{+}(n,\phi)$ is captured by the following quantities.
\begin{definition}
Given $\phi \in \Aut(\sigma_{A})$, define
\begin{equation}\label{def:lyaps}
\begin{aligned}
\alpha^{-}(\phi) = \lim_{n \to \infty}\frac{W^{-}(n,\phi)}{n}\\
\alpha^{+}(\phi) = \lim_{n \to \infty}\frac{W^{+}(n,\phi)}{n}.
\end{aligned}
\end{equation}
\end{definition}
That $\alpha^{-}(\phi)$ and $\alpha^{+}(\phi)$ are both finite follows from the fact that both $\phi$ and $\phi^{-1}$ are each given by a block code of finite range.\\
\indent An automorphism $\phi \in \Aut(X,\sigma)$ is called \emph{range distorted} if
$$\alpha^{-}(\phi) = \alpha^{+}(\phi) = 0.$$
This notion of range distortion was introduced in \cite[Def. 5.8]{CFKSpacetime} (note our definition is not stated in the same language as theirs, but is equivalent; we refer the reader to their Proposition 5.12 to see this), but is related to older notions of automorphisms having a unique non-expansive subspace, as in \cite{BoyleLindExpansive}.\\

The following theorem gives a qualitative connection between the Lyapunov exponents $\alpha^{-}(\phi), \alpha^{+}(\phi)$ and the associated action of $\phi$ on the dimension group.
\begin{theorem}[\cite{CFKSpacetime}, Theorem 5.15]\label{thm:CyrKraFranks}
Let $(X_{A},\sigma_{A})$ be an irreducible shift of finite type such that $\textnormal{det}(I-tA)$ is an irreducible polynomial, and let $\phi \in \Aut(\sigma_{A})$. If both $\phi$ and $\phi^{-1}$ are range distorted, then $\phi$ is inert.
\end{theorem}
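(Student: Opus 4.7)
My plan is to deduce the theorem from the main Lyapunov/spectral bound of Theorem \ref{thm:main} via a two-step argument: first show that range distortion of both $\phi$ and $\phi^{-1}$ forces the entire spectrum of $S_\phi = \pi_A(\phi)$ on $\mathcal{G}_A \otimes \mathbb{C}$ to lie on the unit circle, then use the irreducibility of $\det(I-tA)$ to conclude $S_\phi = \Id$ and hence that $\phi$ is inert.

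For the first step, I would apply Theorem \ref{thm:main} to $\phi$. Since $\phi$ is range distorted, $\alpha^{-}(\phi) = \alpha^{+}(\phi) = 0$, and the lower bound on the Lyapunov exponents in terms of $\log \rho(S_\phi)$ forces $\rho(S_\phi) \le 1$. Applying the same reasoning to $\phi^{-1}$ (which is range distorted by hypothesis) and noting that $S_{\phi^{-1}} = S_\phi^{-1}$, we obtain $\rho(S_\phi^{-1}) \le 1$, so every eigenvalue of $S_\phi$ has modulus at least $1$. Combining the two bounds, every eigenvalue of $S_\phi$ has absolute value exactly $1$.

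For the second step, I would invoke the homomorphism $\Psi \colon \Aut(\sigma_A) \to \mathbb{R}_+$ from \eqref{eqn:autotracemap}. The distinguished positive real eigenvalue $\lambda_\phi = \Psi(\phi)$ (coming from the Perron eigenvector of $A$) is one of the eigenvalues of $S_\phi$, so the previous step forces $\lambda_\phi = 1$. Since $\Psi$ factors through $\pi_A$ (the value $\lambda_\phi$ depends only on $S_\phi$), and since, under the hypothesis that $\det(I-tA)$ is irreducible, the induced map on the image of $\pi_A$ is injective, we obtain $\pi_A(\phi) = \Id$; that is, $\phi$ is inert.

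The substantive obstacle is entirely concentrated in the appeal to Theorem \ref{thm:main}: once the spectral radius bound $\rho(S_\phi) \le 1$ is in hand, everything afterwards is a formal consequence of the algebraic rigidity imposed by irreducibility of the characteristic polynomial. One subtlety worth addressing explicitly is that the ``injectivity'' of $\Psi$ used here is not literal injectivity on all of $\Aut(\sigma_A)$ (which is generally non-abelian, so cannot embed into $\mathbb{R}_+$), but rather the statement that $\ker \Psi$ coincides with $\ker \pi_A$, which is what the reference \cite{BMT1987} actually supplies.
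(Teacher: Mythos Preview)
Your proposal is correct and follows exactly the paper's route: it is the content of the Remark after Theorem~\ref{thm:unitcircle} combined with the proof of Theorem~\ref{thm:unitcircle} itself, both of which reduce the statement to Theorem~\ref{thm:main} plus the injectivity of $\Psi$ under the irreducibility hypothesis. One small imprecision: in your first step, getting $\rho(S_\phi)\le 1$ from inequality~\eqref{eqn:bound-} requires $\alpha^{-}(\phi^{-1})=0$ as well as $\alpha^{-}(\phi)=0$, so you are already using that $\phi^{-1}$ is range distorted there, not only in the second application; the paper's proof of Theorem~\ref{thm:unitcircle} makes this explicit.
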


In this section we will prove the following, which generalizes Theorem \ref{thm:CyrKraFranks}.
\begin{theorem}\label{thm:unitcircle}
Let $(X_{A},\sigma_{A})$ be an irreducible shift of finite type, and let $\phi \in \Aut(\sigma_{A})$. If both $\phi$ and $\phi^{-1}$ are range distorted, then all eigenvalues of $\pi_{A}(\phi) = S_{\phi}$ lie on the unit circle.
\end{theorem}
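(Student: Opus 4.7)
The natural route is to establish a quantitative inequality (of the type referenced as Theorem \ref{thm:main}) bounding $\log|\lambda|$, for any eigenvalue $\lambda$ of $S_\phi$, by a linear combination of $\alpha^\pm(\phi)$ and $\alpha^\pm(\phi^{-1})$ multiplied by $\log \lambda_A$. Under range distortion of both $\phi$ and $\phi^{-1}$ all four Lyapunov exponents vanish, so the bound collapses to $|\lambda| \le 1$. Since $\pi_A$ is a homomorphism, $S_{\phi^{-1}} = S_\phi^{-1}$, and the same reasoning applied to $\phi^{-1}$ gives $|\lambda^{-1}| \le 1$ for every eigenvalue of $S_\phi$, whence $|\lambda| = 1$.

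To produce the quantitative bound I would mimic the proof of Theorem \ref{thm:perronentropybound}, but replace the distinguished Perron state $\tau_{\mu_u}$ with auxiliary complex-valued functionals on $D_A \otimes \mathbb{C}$ associated to arbitrary joint eigenvectors of $\delta_A$ and $S_\phi$. Because $S_\phi$ commutes with $\delta_A$, each generalized $\mu$-eigenspace of $\delta_A$ is $S_\phi$-invariant; pick an eigenvalue $\lambda$ of $S_\phi$ on such a space with $|\lambda|=\rho(S_\phi)$, a corresponding left eigenvector $w$ of $A$ for $\mu$, and set
$$\tau_w([R(x,n)]) = \mu^{-n} w(t(x_n)),$$
adjusted within the $\mu$-eigenspace so that $\tau_w \circ \phi_* = \lambda \tau_w$. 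Since $A$ is irreducible one can pick $x$ with $\tau_w([R_x]) \neq 0$.

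The key estimate comes from writing $\phi^n(R_x)$ as a $k(n)$-beam of $I(n)$ many $k(n)$-rays, taking $k(n) = \max\{0, -W^-(n,\phi^{-1})\}$ by Lemma \ref{lemma:rays}. Applying $\tau_w$ to $\phi^n([R_x]) = \sum_i [V_i]$ and combining with the upper bound $I(n) \le J(n) \cdot \mathrm{card}(C^\phi(n))$, where $J(n) \le C\lambda_A^{k(n)}$ counts the $k(n)$-rays in $R_x$ exactly as in Theorem \ref{thm:perronentropybound}, yields
$$|\lambda|^n \cdot |\tau_w([R_x])| \le C' \cdot (\lambda_A/|\mu|)^{k(n)} \cdot \mathrm{card}(C^\phi(n)).$$
In the range-distorted regime $k(n) = o(n)$, and the coding range of $\phi^n$ is $o(n)$, so the window-complexity $\mathrm{card}(C^\phi(n))$ is subexponential in $n$. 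Taking logarithms and dividing by $n$ gives $\log|\lambda| \le 0$, as required.

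The main obstacle is the triangle-inequality loss inherent to $\tau_w$ when $w$ has mixed signs or complex values: unlike the positive Perron state, $\tau_w([V_i])$ can exhibit cancellations that block a sharp comparison between $|\tau_w(\phi^n[R_x])|$ and $I(n)$. Since only the exponential growth rates matter in the end, $\max$-versus-sum bounds are acceptable, but one must argue carefully that enough does not cancel. A secondary technical issue is non-semisimple spectrum of either $S_\phi$ or $\delta_A$; Jordan blocks contribute polynomial-in-$n$ correction factors that do not affect the exponential asymptotics, and can be absorbed by using generalized left eigenvectors in place of $w$.
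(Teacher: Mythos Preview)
Your high-level plan---bound $\rho(S_\phi)\le 1$ and $\rho(S_{\phi^{-1}})=\rho(S_\phi^{-1})\le 1$ via a quantitative inequality that collapses under range distortion of both $\phi$ and $\phi^{-1}$---is exactly the paper's strategy: Theorem~\ref{thm:unitcircle} is deduced in one line from Theorem~\ref{thm:main}. The execution, however, differs, and your version carries a real gap.

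The paper does \emph{not} use eigenfunctionals. It works with the operator $1$-norm of $T_\phi^n$ on $\mathbb{C}^k$: since $T_\phi^n e_i=\theta([\phi^n(U_i)])$, Lemma~\ref{lemma:rays} bounds the number of $(-W^-(n,\phi^{-1}))$-rays in $\phi^n(U_i)$ \emph{directly} by $\mathcal{P}_{X_A}\bigl(|W^-(n,\phi^{-1})|-W^-(n,\phi)\bigr)$, giving $\lVert T_\phi^n\rVert_1\le K\,\lVert\delta_A^{W^-(n,\phi^{-1})}\rVert_1\cdot\mathcal{P}_{X_A}(A^-(n))$. Gelfand's formula then yields the spectral radius bound with no reference to $h_{top}(\phi)$, no eigenvectors, and no Jordan-block issues.

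Your route instead borrows the estimate $I(n)\le J(n)\cdot\mathrm{card}(C^\phi(n))$ from the proof of Theorem~\ref{thm:perronentropybound}. The gap is the claim that $\mathrm{card}(C^\phi(n))$ is subexponential because ``the coding range of $\phi^n$ is $o(n)$.'' That is a non sequitur: $C^\phi(n)$ is a set of $\phi$-orbit itineraries of length $n$ through cylinders of \emph{fixed} width $2r+2$ (with $r$ the coding range of $\phi$, not of $\phi^n$), and its exponential growth rate is $h_{top}(\phi)$, not a window-complexity governed by $r_n$. You would need $h_{top}(\phi)=0$, which does follow from range distortion but via a separate argument (\cite[Thm.~5.13]{CFKSpacetime}); so at best your proof takes a detour through entropy. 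The clean fix is to drop the $J(n)\cdot\mathrm{card}(C^\phi(n))$ bound entirely and instead bound $I(n)$ by $\mathcal{P}_{X_A}(A^-(n))$ using Lemma~\ref{lemma:rays} directly---which is precisely the paper's move. Once you do that, your eigenfunctional computation goes through (and your ``main obstacle'' about cancellation in $\tau_w$ evaporates: you only ever need the triangle-inequality upper bound $|\sum_i\tau_w([V_i])|\le I(n)\,|\mu|^{-k(n)}\|w\|_\infty$, never a lower bound). The paper's norm approach is still tidier, since it delivers the spectral radius at once and sidesteps the semisimplicity worries you flag.
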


\begin{remark}
To see how Theorem \ref{thm:CyrKraFranks} follows from Theorem \ref{thm:unitcircle}, suppose both $\phi$ and $\phi^{-1}$ are range distorted. Then Theorem \ref{thm:unitcircle} implies $\lambda_{\phi} = 1$. In the case $\textnormal{det}(I-tA)$ is irreducible, the map $\Psi$ from \eqref{eqn:autotracemap} is injective, and this implies $\phi$ is inert.
\end{remark}

It is not true in general that if $\phi \in \Aut(\sigma_{A})$ is an automorphism such that both $\phi$ and $\phi^{-1}$ are range distorted, then $\phi$ must be inert. In fact, there exist automorphisms of finite order which are not inert; an explicit example is given in Section \ref{sec:entropybound}. Such examples together with Theorem \ref{thm:CyrKraFranks} motivate the following question.\\

\begin{question}\label{question:rdpowerinert}
If $\phi \in \Aut(\sigma_{A})$ is an automorphism of an irreducible shift of finite type and both $\phi$ and $\phi^{-1}$ are range distorted, must $\phi^{k}$ be inert for some $k \ne 0$?
\end{question}

\begin{remark}\label{remark:connectionremark}
Question \ref{question:zepowerinert} concerns zero entropy automorphisms, while Question \ref{question:rdpowerinert} is concerned with range distorted automorphisms. Regarding the connection between zero entropy and range distortion, in \cite[Theorem 5.13]{CFKSpacetime} it is shown that if an automorphism $\phi$ is range distorted, then $h_{top}(\phi) = 0$. The converse of this is false; that is, there exist shifts of finite type having automorphisms $\phi$ for which $h_{top}(\phi) = 0$, but $\phi$ is not range distorted (we thank Ville Salo for pointing out an example of such an automorphism).
\end{remark}

We note that, since range distorted automorphisms have zero entropy (by \cite[Theorem 5.13]{CFKSpacetime}), a positive answer to Question \ref{question:zepowerinert} would imply a positive answer to Question \ref{question:rdpowerinert}.\\

Theorem \ref{thm:unitcircle} will follow from the more general Theorem \ref{thm:main} below, which gives bounds on the spectral radius of the action of an automorphism on the dimension group in terms of the associated Lyapunov exponents of the automorphism. We first introduce some more notation.\\
\indent Recall that for a $k \times k$ square matrix $A$ over $\mathbb{Z}_{+}$, we let $R(A)$ denote the eventual range subspace $\mathbb{Q}^{k}A^{k} \subset \mathbb{Q}^{k}$. Let $NS_{A} \colon R(A) \to R(A)$ denote the linear map induced by the action of $A$ on $R(A)$. Since $A$ is not nilpotent, $\textnormal{dim}R(A) \ge 1$ and the map $NS_{A}$ is invertible, and we let $\rho_{A}^{-}$ denote the spectral radius of the linear map ${NS_{A}^{-1} \otimes 1 \colon R(A) \otimes \mathbb{C} \to R(A) \otimes \mathbb{C}}$. We note that the linear maps $NS_{A}$ and $\delta_{A} \otimes 1 \colon \mathcal{G}_{A} \otimes \mathbb{Q} \to \mathcal{G}_{A} \otimes \mathbb{Q}$ are isomorphic, and $\rho_{A}^{-}$ may be computed directly from the matrix $A$: if $\lambda_{s}$ is an eigenvalue of the map $T_{A} \colon \mathbb{C}^{k} \to \mathbb{C}^{k}$ given by $x \mapsto Ax$, and $\lambda_{s}$ satisfies $0 < \lvert \lambda_{s} \rvert \le \lvert \lambda_{i} \rvert$ for all eigenvalues $\lambda_{i}$ of $T_{A}$, then $\rho_{A}^{-} = \lvert \lambda_{s} \rvert ^{-1}$.

\begin{theorem}\label{thm:main}
Let $(X_{A},\sigma_{A})$ be an irreducible shift of finite type, and let $\phi \in \Aut(\sigma_{A})$. Let $S_{\phi} = \pi_{A}(\phi)$ denote the image of $\phi$ under the dimension representation \\${\pi_{A} \colon \Aut(\sigma_{A}) \to \Aut(\mathcal{G}_{A},\mathcal{G}_{A}^{+},\delta_{A})}$, and let $\rho(S_{\phi})$ denote the spectral radius of the linear map $S_{\phi} \otimes 1 \colon \mathcal{G}_{A} \otimes \mathbb{C} \to \mathcal{G}_{A} \otimes \mathbb{C}$. Let $\rho_{A}^{-}$ denote the spectral radius of $NS_{A}^{-1}$. Then the following hold:
\begin{equation}\label{eqn:bound-}
\log \rho(S_{\phi}) \le \left[\lvert \alpha^{-}(\phi^{-1}) \rvert - \alpha^{-}(\phi)\right]h_{hop}(\sigma_{A}) + \lvert \alpha^{-}(\phi^{-1}) \rvert \log \rho_{A}^{-}.
\end{equation}
\begin{equation}\label{eqn:bound+}
\log \rho(S_{\phi}) \le \left[\lvert \alpha^{+}(\phi^{-1}) \rvert + \alpha^{+}(\phi)\right]h_{hop}(\sigma_{A}) + \lvert \alpha^{+}(\phi^{-1}) \rvert \log \rho_{A}^{-}.
\end{equation}
Moreover, both of the following hold:
\begin{enumerate}
\item
If $\alpha^{-}(\phi^{-1}) > 0$, then $\alpha^{-}(\phi) < 0$, and
\begin{equation}\label{eqn:earthbound-}
\log\rho(S_{\phi}) \le -\alpha^{-}(\phi)h_{top}(\sigma_{A}).
\end{equation}
\item
If $\alpha^{+}(\phi^{-1}) < 0$, then $\alpha^{+}(\phi) > 0$, and
\begin{equation}\label{eqn:earthbound+}
\log\rho(S_{\phi}) \le \alpha^{+}(\phi)h_{top}(\sigma_{A}).
\end{equation}
\end{enumerate}
\end{theorem}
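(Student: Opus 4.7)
The plan is to reduce the bound on $\rho(S_\phi)$ to a norm estimate in $R(A) \cong \mathcal{G}_A \otimes \mathbb{Q}$ driven by a combinatorial count of rays in $\phi^n$-images. Since $\rho(S_\phi) = \limsup_n \|S_\phi^n\|^{1/n}$ and $\mathcal{G}_A \otimes \mathbb{Q}$ is spanned by finitely many classes $[R_{x_i}]$ of $0$-rays (one per terminal state of $A$, together with finitely many $\delta_A$-shifts), it suffices to bound $\|S_\phi^n[R_x]\|$ for a generic $0$-ray $R_x = R(x,0)$ and maximize over the finite generating set.

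The central combinatorial step, which I would isolate as a lemma, is that $\phi^n(R(x,0))$ is a $k(n)$-beam with $k(n) := \max\{W^-(n,\phi),\,-W^-(n,\phi^{-1})\}$, consisting of at most $I(n)$ many $k(n)$-rays, with $I(n) \le C\lambda_A^{k(n) - W^-(n,\phi)}$ for a constant $C$ independent of $n$. The bound $k(n) \ge W^-(n,\phi)$ is forced by the inclusion $\phi^n(R(x,0)) \subset R(\phi^n(x), W^-(n,\phi))$, which is immediate from the definition of $W^-(n,\phi)$, so any $k(n)$-ray in the decomposition must refine this containing ray. The bound $k(n) \ge -W^-(n,\phi^{-1})$ is what makes the partition into $k(n)$-rays well-defined: translation-invariance of $\phi^{-n}$-codings forces $(-\infty,k(n)]$ to $\phi^{-n}$-code $(-\infty,0]$, so membership of $R(z,k(n))$ in $\phi^n(R(x,0))$ depends only on $z_{(-\infty,k(n)]}$ and therefore each $k(n)$-ray is entirely inside or entirely outside $\phi^n(R(x,0))$. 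The bound on $I(n)$ then follows from the standard Perron-Frobenius count for the number of edge-paths of length $k(n) - W^-(n,\phi)$ in the graph of $A$ starting at the terminal state of $\phi^n(x)_{W^-(n,\phi)}$.

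Applying Krieger's isomorphism, $\theta([U]) = v_{U,k(n)}A^{-k(n)} \in R(A)$ with $\|v_{U,k(n)}\|_1 \le I(n)$, so $\|S_\phi^n[R_x]\| \le I(n)\cdot\|A^{-k(n)}|_{R(A)}\|$; up to polynomial factors in $n$ which vanish under $n$-th roots, the operator norm is $(\rho_A^-)^{k(n)}$ when $k(n) \ge 0$ and $\lambda_A^{|k(n)|}$ when $k(n) \le 0$. Taking $\limsup_n (\cdot)^{1/n}$ with $\tilde k := \lim k(n)/n = \max\{\alpha^-(\phi), -\alpha^-(\phi^{-1})\}$ and $h_{top}(\sigma_A) = \log\lambda_A$, one obtains in the case $\tilde k \ge 0$ the bound $\log\rho(S_\phi) \le (\tilde k - \alpha^-(\phi))\,h_{top}(\sigma_A) + \tilde k\log\rho_A^-$, while in the case $\tilde k < 0$ the cancellation $I(n)\cdot\lambda_A^{|k(n)|} \le C\lambda_A^{-W^-(n,\phi)}$ gives the stronger $\log\rho(S_\phi) \le -\alpha^-(\phi)\,h_{top}(\sigma_A)$. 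The elementary inequality $\alpha^-(\phi) + \alpha^-(\phi^{-1}) \le 0$, obtained by composing the $\phi^n$- and $\phi^{-n}$-codings of $(-\infty,0]$ and using that the identity has zero leftward propagation, then forces $\tilde k = -\alpha^-(\phi^{-1}) = |\alpha^-(\phi^{-1})|$ precisely when $\alpha^-(\phi^{-1}) \le 0$, recovering \eqref{eqn:bound-}; it also shows that $\alpha^-(\phi^{-1}) > 0$ implies $\alpha^-(\phi) < 0$ and $\tilde k < 0$, yielding the moreover bound \eqref{eqn:earthbound-}.

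The inequalities \eqref{eqn:bound+} and \eqref{eqn:earthbound+} follow by the symmetric argument using right rays $R^+(x,m) = \{y : y_{[m,\infty)} = x_{[m,\infty)}\}$ in place of left rays, or equivalently via the coordinate-reversal conjugacy $(X_A,\sigma_A) \cong (X_{A^T},\sigma_{A^T}^{-1})$, which swaps $\alpha^-$ with $\alpha^+$ while preserving $\rho(S_\phi)$ and $\rho_A^-$. The main obstacle, I expect, is the sign-dependent case split on $k(n)$: the $\log\rho_A^-$ contribution arises only when $k(n) \ge 0$ eventually, which is precisely why the general bounds carry the extra $|\alpha^-(\phi^{-1})|\log\rho_A^-$ term while the moreover refinements, which apply exactly when $\alpha^-(\phi^{-1}) > 0$ forces $k(n) < 0$, drop it.
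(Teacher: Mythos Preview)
Your proposal is correct and follows essentially the same route as the paper: the key lemma that $\phi^n(R(x,0))$ is a $(-W^-(n,\phi^{-1}))$-beam with at most $\mathcal{P}_{X_A}(-W^-(n,\phi^{-1})-W^-(n,\phi))$ rays, the norm bound via $\theta$ and Gelfand's formula, the case split on the sign of $\alpha^-(\phi^{-1})$, and the reduction of the $\alpha^+$ statements to the $\alpha^-$ ones via the reversal conjugacy with $(X_{A^T},\sigma_{A^T})$ are all exactly what the paper does. Note that your $k(n)=\max\{W^-(n,\phi),-W^-(n,\phi^{-1})\}$ is always $-W^-(n,\phi^{-1})$ by the inequality $W^-(n,\phi)+W^-(n,\phi^{-1})\le 0$, so your $\tilde k$ split is literally the paper's split on $\alpha^-(\phi^{-1})\le 0$ versus $>0$; the paper also uses super-additivity of $W^-(n,\phi^{-1})$ to pin down the sign of $k(n)$ for all (not just large) $n$ in the $\alpha^-(\phi^{-1})\le 0$ case, which tightens the part you described as ``up to polynomial factors.''
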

\begin{remark}
There are cases where the bounds \eqref{eqn:bound-}, \eqref{eqn:bound+} become sharp: just consider the shift itself as an automorphism $\sigma_{n} \in \Aut(\sigma_{n})$, where $\sigma_{n}$ is a full shift. Furthermore, there are automorphisms for which the $\lvert \alpha^{\pm}(\phi^{-1}) \rvert$ terms in \eqref{eqn:bound-}, \eqref{eqn:bound+} are necessary. For example, let $(X_{A},\sigma_{A})$ denote the golden mean shift associated to the matrix $A = \begin{pmatrix} 1 & 1\\ 1 & 0 \end{pmatrix}$ whose spectral radius is $\lambda_{A} = \frac{1+\sqrt{5}}{2}$. For the automorphism $\tau = 1 \times \sigma_{A}^{-1}$ of the product system $(X_{A} \times X_{A},\sigma_{A} \times \sigma_{A})$, it is easy to check that $\alpha^{-}(\tau) = 0, \alpha^{-}(\tau^{-1}) = -1$, while ${\log \rho(S_{\tau}) = \log \lambda_{A}}$.
\end{remark}

In the case of a full shift, the bounds \eqref{eqn:bound-}, \eqref{eqn:bound+} in Theorem \ref{thm:main} become simpler.
\begin{corollary}\label{cor:waterhorse}
If $\phi \in \Aut(\sigma_{n})$ is an automorphism of the full shift on $n$ symbols, then both of the following hold:
\begin{equation}\label{eqn:skybound-}
\log\rho(S_{\phi}) \le -\alpha^{-}(\phi)h_{top}(\sigma_{A})
\end{equation}
\begin{equation}\label{eqn:skybound+}
\log\rho(S_{\phi}) \le \alpha^{+}(\phi)h_{top}(\sigma_{A}).
\end{equation}
Thus if $\phi$ satisfies either
$$\alpha^{-}(\phi) > 0$$
or
$$\alpha^{+}(\phi) < 0.$$
then $\phi$ is not inert.
\end{corollary}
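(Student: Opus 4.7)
The plan is to derive the corollary by specializing Theorem \ref{thm:main} to the full shift setting. The decisive input is the computation of $\rho_A^{-}$ for the full shift on $n$ symbols. Taking $A = [n]$ as a $1 \times 1$ matrix (the canonical presentation), one has $R(A) = \mathbb{Q}$, the map $NS_A$ is multiplication by $n$, and $NS_A^{-1}$ is multiplication by $1/n$. Therefore $\rho_A^- = 1/n$, which yields the key identity
$$\log \rho_A^- = -\log n = -h_{top}(\sigma_A).$$

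With this identity in hand, I would substitute directly into the bound \eqref{eqn:bound-} from Theorem \ref{thm:main}. The two terms involving $\lvert \alpha^{-}(\phi^{-1}) \rvert$ then cancel exactly: the contribution $\lvert \alpha^{-}(\phi^{-1}) \rvert h_{top}(\sigma_A)$ from the first bracket is killed by $\lvert \alpha^{-}(\phi^{-1}) \rvert \log \rho_A^- = -\lvert \alpha^{-}(\phi^{-1}) \rvert h_{top}(\sigma_A)$, leaving exactly
$$\log \rho(S_\phi) \le -\alpha^-(\phi) h_{top}(\sigma_A),$$
which is \eqref{eqn:skybound-}. The inequality \eqref{eqn:skybound+} follows by the identical cancellation applied to \eqref{eqn:bound+}.

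For the non-inertness conclusion, I would argue by contradiction. If $\phi$ is inert, then $S_\phi$ is the identity on $\mathcal{G}_A$, so $\rho(S_\phi) = 1$ and $\log \rho(S_\phi) = 0$. However, $h_{top}(\sigma_n) = \log n > 0$, so the hypothesis $\alpha^-(\phi) > 0$ (resp.\ $\alpha^+(\phi) < 0$) makes the right-hand side of \eqref{eqn:skybound-} (resp.\ \eqref{eqn:skybound+}) strictly negative, contradicting $\log \rho(S_\phi) = 0$. Hence $\phi$ is not inert.

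There is no real obstacle here beyond confirming the $1 \times 1$ matrix computation of $\rho_A^-$ for the full shift; once the identity $\log \rho_A^- = -h_{top}(\sigma_A)$ is established, the corollary reduces to a direct cancellation in the bounds furnished by Theorem \ref{thm:main}, and the non-inertness claim is immediate from the strict negativity of the resulting bound.
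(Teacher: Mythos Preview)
Your proof is correct and follows essentially the same route as the paper: compute $\rho_A^-=1/n$ for the $1\times 1$ presentation of the full shift so that $\log\rho_A^- + h_{top}(\sigma_n)=0$, and then substitute into \eqref{eqn:bound-} and \eqref{eqn:bound+} to cancel the $\lvert\alpha^{\pm}(\phi^{-1})\rvert$ terms. Your added justification of the non-inertness clause (via $\rho(S_\phi)=1$ for inert $\phi$ versus a strictly negative right-hand side) is a welcome elaboration that the paper leaves implicit.
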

\begin{proof}
If $(X_{n},\sigma_{n})$ is the full shift on $n$ symbols, we have
$$\log(\rho_{n}^{-})+h_{top}(\sigma_{n}) = \log(\frac{1}{n})+\log(n) = 0.$$
The result then follows from \eqref{eqn:bound-} and \eqref{eqn:bound+}.
\end{proof}
We observe that this cancellation $\log(\rho_{A}^{-}) + h_{top}(\sigma_{A}) = 0$ only occurs in the case $A$ is shift equivalent to a $1 \times 1$ matrix $(n)$ for some $n \in \mathbb{N}$.\\

We first prove Theorem \ref{thm:unitcircle} using Theorem \ref{thm:main}.
\begin{proof}[Proof of Theorem \ref{thm:unitcircle}]
If the automorphisms $\phi$ and $\phi^{-1}$ are both range distorted, then we have $\alpha^{-1}(\phi) = \alpha^{-}(\phi^{-1}) = 0$. Theorem \ref{thm:main} applied to both $\phi$ and $\phi^{-1}$ then implies $\rho(S_{\phi}) \le 1$ and $\rho(S_{\phi}^{-1}) \le 1$. This only happens if every eigenvalue of the linear map $S_{\phi} \otimes 1 \colon \mathcal{G}_{A} \otimes \mathbb{C} \to \mathcal{G}_{A} \otimes \mathbb{C}$ lies on the unit circle.
\end{proof}

The remainder of the section is devoted to the proof of Theorem \ref{thm:main}. Throughout, we will use $\mathcal{P}_{X_{A}}(n)$ to denote the number of admissible words of length $n$ in $(X_{A},\sigma_{A})$; for notational reasons, we define $\mathcal{P}_{X_{A}}(0)=1$. For a vector $v = (v_{1},\ldots,v_{k})$ we let $\lVert v \rVert_{1}$ denote the 1-norm, so $\lVert v \rVert_{1} = \sum_{1 \le i \le k} \lvert v_{i} \rvert$. For a linear map $T$ we let $\lVert T \rVert_{1}$ denote the operator norm induced by the 1-norm, i.e. $\lVert T \rVert_{1} = \textnormal{max}_{1 \le j \le k}\sum_{i=1}^{k}\lvert a_{ij} \rvert$.\\

First let us observe that for any $n \ge 1$ we have
\begin{equation}\label{eqn:smolthing-}
W^{-}(n,\phi) + W^{-}(n,\phi^{-1}) \le 0,
\end{equation}
\begin{equation}\label{eqn:smolthing+}
W^{+}(n,\phi) + W^{+}(n,\phi^{-1}) \ge 0,
\end{equation}
and hence
\begin{equation}\label{eqn:smollimitthing-}
\alpha^{-}(\phi) + \alpha^{-}(\phi^{-1}) \le 0,
\end{equation}
\begin{equation}\label{eqn:smollimitthing+}
\alpha^{+}(\phi) + \alpha^{+}(\phi^{-1}) \ge 0.
\end{equation}
One can derive these directly; a proof may be found in \cite[Prop. 3.13]{CFKSpacetime}, or \cite[Prop. 6.7]{NasuDegreesResolving}. The inequalities \eqref{eqn:smollimitthing-}, \eqref{eqn:smollimitthing+} give the first claims in parts (1) and (2) of Theorem \ref{thm:main}. \\

We now show how the inequalities \eqref{eqn:bound+}, \eqref{eqn:earthbound+} for $\alpha^{+}(\phi)$ follow from the inequalities \eqref{eqn:bound-}, \eqref{eqn:earthbound-} for $\alpha^{-}(\phi)$. Given $(X_{A},\sigma_{A})$, the \emph{reverse map}
$$r \colon (X_{A},\sigma_{A}^{-1}) \to (X_{A^{T}},\sigma_{A^{T}}), \hspace{.17in} r(x)_{i} = x_{-i},$$
is a topological conjugacy. Here by $A^{T}$ we mean the transpose of the matrix $A$. Since $\Aut(\sigma_{A}) = \Aut(\sigma_{A}^{-1})$ in a natural way, the reverse map $r$ gives an isomorphism
$$r^{*} \colon \Aut(\sigma_{A}) \to \Aut(\sigma_{A^{T}}), \hspace{.17in} r^{*}(\phi) = r\phi r^{-1}.$$
A quick check shows that for all $n \in \mathbb{N}$,
$$W^{-}(n,r^{*}(\phi)) = -W^{+}(n,\phi),$$
and hence
$$\alpha^{-}(r^{*}(\phi)) = -\alpha^{+}(\phi).$$
Then given $\phi \in \Aut(\sigma_{A})$, the inequalities \eqref{eqn:bound+}, \eqref{eqn:earthbound+} follow from applying \eqref{eqn:bound-}, \eqref{eqn:earthbound-} to the automorphism $r^{*}(\phi) \in \Aut(\sigma_{A^{T}})$, together with the observation that $h(\sigma_{A^{T}}) = h(\sigma_{A})$ and $\rho_{A}^{-} = \rho_{A^{T}}^{-}$.

\begin{remark}
Throughout the paper, we have only used the dimension group built from unstable sets. An analogous dimension group $\mathcal{G}_{A}^{s}$ built from stable sets may be similarly defined, leading to an alternative dimension representation ${\pi_{A}^{s} \colon \Aut(\sigma_{A}) \to \Aut(\mathcal{G}_{A}^{s})}$ of $\Aut(\sigma_{A})$ to the automorphisms of this stable dimension group. Since the Lyapunov exponent $\alpha^{-}(\phi)$ is defined in terms of the action of $\phi$ on unstable sets, and $\alpha^{+}(\phi)$ in terms of the action on stable sets, the quantity $\alpha^{-}(\phi)$ relates more immediately to the action of $\phi$ on the unstable dimension group, and $\alpha^{+}(\phi)$ to the action on the stable dimension group. We could have alternatively obtained the inequalities \eqref{eqn:bound+}, \eqref{eqn:earthbound+} for $\alpha^{+}$ using the stable dimension representation $\pi_{A}^{s}$. We feel it is worth giving a brief explanation why the reverse map $r$ used above accomplishes the same thing.\\
\indent Let us denote by $\pi_{A}^{u} \colon \Aut(\sigma_{A}) \to \Aut(\mathcal{G}_{A})$ the unstable dimension representation (i.e. the one used throughout the paper), and for $\phi \in \Aut(\sigma_{A})$ consider the corresponding complex linear maps
$$T_{\phi,\mathbb{C}}^{u} = \pi_{A}^{u}(\phi) \otimes 1 \colon \mathcal{G}_{A} \otimes \mathbb{C} \to \mathcal{G}_{A} \otimes \mathbb{C},$$
$$T_{\phi,\mathbb{C}}^{s} = \pi_{A}^{s}(\phi) \otimes 1 \colon \mathcal{G}_{A}^{s} \otimes \mathbb{C} \to \mathcal{G}_{A}^{s} \otimes \mathbb{C}.$$
If $T \colon V \to V$ is a linear map of complex vector spaces we let $T^{*}$ denote the dual map, i.e. the map $T^{*} \colon \textnormal{Hom}(V,\mathbb{C}) \to \textnormal{Hom}(V,\mathbb{C})$ induced by $T$. Then the relationship between the two dimension representations may be summarized as follows: for $\phi \in \Aut(\sigma_{A})$ we have
\begin{equation}\label{eqn:unstablestablefrends}
T_{\phi,\mathbb{C}}^{s} = (T_{\phi,\mathbb{C}}^{u})^{*}.
\end{equation}
Since our results concern the spectra of the maps $T_{\phi,\mathbb{C}}^{u}$ and $T_{\phi,\mathbb{C}}^{s}$, and these spectra are the same (by \eqref{eqn:unstablestablefrends}), we can derive results regarding $\alpha^{+}(\phi)$ using $\alpha^{-}(\phi)$ and $\pi_{A}^{u}(\phi)$, together with this duality. For more on this duality, we refer the reader to \cite{PutnamKilloughModules}.
\end{remark}

Continuing with the proof of Theorem \ref{thm:main}, the following lemma will play an important role.

\begin{lemma}\label{lemma:rays}
Let $R(x,0)$ be any $0$-ray in $X_{A}$. For any $n \ge 1$, $\phi^{n}(R(x,0))$ is a $-W^{-}(n,\phi^{-1})$-beam. Moreover, the following holds:
\begin{enumerate}
\item
If $W^{-}(n,\phi^{-1}) \le 0$, then $\phi^{n}(R(x,0))$ is a union of at most $$\mathcal{P}_{X_{A}}(\lvert W^{-}(n,\phi^{-1}) \rvert -W^{-}(n,\phi))$$
many distinct $-W^{-}(n,\phi^{-1})$-rays.
\item
If $W^{-}(n,\phi^{-1}) \ge 0$, then $\phi^{n}(R(x,0))$ is a union of at most $$\mathcal{P}_{X_{A}}(\lvert W^{-}(n,\phi) \rvert -W^{-}(n,\phi^{-1}))$$ many distinct $-W^{-}(n,\phi^{-1})$-rays.
\end{enumerate}
\end{lemma}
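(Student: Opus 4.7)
The plan is to reduce both cases to a single counting problem by setting $m := -W^{-}(n,\phi^{-1})$ and $p := W^{-}(n,\phi)$, and then proving that $\phi^{n}(R(x,0))$ is an $m$-beam comprised of at most $\mathcal{P}_{X_{A}}(m - p)$ many distinct $m$-rays. The inequality \eqref{eqn:smolthing-}, namely $W^{-}(n,\phi) + W^{-}(n,\phi^{-1}) \le 0$, rewrites as $p \le m$, so $m - p \ge 0$ and $\mathcal{P}_{X_{A}}(m-p)$ is well defined. A brief sign check then shows $m - p = |W^{-}(n,\phi^{-1})| - W^{-}(n,\phi)$ when $W^{-}(n,\phi^{-1}) \le 0$, and $m - p = |W^{-}(n,\phi)| - W^{-}(n,\phi^{-1})$ when $W^{-}(n,\phi^{-1}) \ge 0$ (noting $p \le m \le 0$ in the latter case), so the single bound $\mathcal{P}_{X_{A}}(m-p)$ matches the two stated bounds simultaneously.

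First I would establish the beam property. By definition of $W^{-}(n,\phi^{-1})$, the set $(-\infty,0]$ $\phi^{-n}$-codes $(-\infty,-m]$; since $\phi^{-n}$ commutes with $\sigma_{A}$, shifting by $m$ promotes this to the statement that $(-\infty,m]$ $\phi^{-n}$-codes $(-\infty,0]$. Now if $z \in \phi^{n}(R(x,0))$ and $z' \in X_{A}$ agrees with $z$ on $(-\infty,m]$, the coding property forces
\[
\phi^{-n}(z')_{(-\infty,0]} \;=\; \phi^{-n}(z)_{(-\infty,0]} \;=\; x_{(-\infty,0]},
\]
so $\phi^{-n}(z') \in R(x,0)$ and hence $z' \in \phi^{n}(R(x,0))$. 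Thus every point of $\phi^{n}(R(x,0))$ carries its entire $m$-ray inside $\phi^{n}(R(x,0))$, so $\phi^{n}(R(x,0))$ is a union of $m$-rays.

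Next I would count the rays. For every $y \in R(x,0)$ the past $y_{(-\infty,0]}$ equals $x_{(-\infty,0]}$, and the definition of $p = W^{-}(n,\phi)$ gives that $(-\infty,0]$ $\phi^{n}$-codes $(-\infty,p]$; hence $\phi^{n}(y)_{(-\infty,p]}$ is the same for every such $y$. Distinct $m$-rays comprising $\phi^{n}(R(x,0))$ are therefore separated only by the finite word $\phi^{n}(y)_{(p,m]}$, which has length $m - p$ and is $X_{A}$-admissible. This gives the bound $\mathcal{P}_{X_{A}}(m-p)$; because this bound is finite, it simultaneously confirms that $\phi^{n}(R(x,0))$ is a \emph{finite} union of $m$-rays, i.e.\ a genuine $m$-beam.

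I do not anticipate a substantial obstacle: the whole argument is an unwinding of the definitions of the coding quantities together with the shift-equivariance of $\phi^{\pm n}$. The only mildly delicate point is keeping track of the signs of $W^{-}(n,\phi^{\pm 1})$ across the two stated subcases, and invoking \eqref{eqn:smolthing-} at the outset to pin down $p \le m$ renders this bookkeeping painless and allows a single counting argument to cover both parts.
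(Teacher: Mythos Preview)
Your proof is correct and follows essentially the same argument as the paper: the beam property via the $\phi^{-n}$-coding of $(-\infty,m]$ to $(-\infty,0]$, and the ray count via the $\phi^{n}$-coding that fixes $\phi^{n}(y)_{(-\infty,p]}$ and leaves only the length-$(m-p)$ block free. The only difference is organizational: the paper treats case~(1) explicitly and declares case~(2) analogous, whereas you collapse both into the single bound $\mathcal{P}_{X_{A}}(m-p)$ up front and relegate the case distinction to a sign check---a mild streamlining, but not a different idea.
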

\begin{proof}
Suppose first we are in case (1), so $W^{-}(n,\phi^{-1}) \le 0$. Given $y \in \phi^{n}(R(x,0))$, we claim
$$\{z \in X_{A} \mid z_{(-\infty,\lvert W^{-}(n,\phi^{-1}) \rvert]} = y_{(-\infty, \lvert W^{-}(n,\phi^{-1}) \rvert]} \} \subset \phi^{n}(R(x,0)).$$
Indeed, for such a $z$, we have $\phi^{-n}(z)$ and $\phi^{-n}(y)$ agree on $(-\infty,0]$. But $\phi^{-n}(y)$ and $x$ agree on $(-\infty,0]$, so $\phi^{-n}(z)$ and $x$ agree on $(-\infty,0]$. Then $\phi^{-n}(z) \in R(x,0)$, and $z \in \phi^{n}(R(x,0))$.\\
\indent The set of words $\mathcal{W}_{n} = \{w = z_{[W^{-}(n,\phi),\lvert W^{-}(n,\phi^{-1}) \rvert]} \mid z \in \phi^{n}(R(x,0))\}$ is finite and non-empty, and we define, for $w \in \mathcal{W}_{n}$, the set
$$B_{w} = \{z \mid z_{(-\infty,W^{-}(n,\phi)-1]} = \phi^{n}(x)_{(-\infty,W^{-}(n,\phi)-1]} \textnormal{ and } z_{[W^{-}(n,\phi),\lvert W^{-}(n,\phi^{-1}) \rvert]} = w \}.$$
Then each $B_{w}$ is a $\lvert W^{-}(n,\phi^{-1}) \rvert$-ray, and we have
$$\phi^{n}(R(x,0)) = \bigcup_{w \in \mathcal{W}_{n}}B_{w}.$$
Case (1) then follows since $\lvert \mathcal{W}_{n} \rvert \le \mathcal{P}_{X_{A}}(\lvert W^{-}(n,\phi^{-1}) \rvert -W^{-}(n,\phi))$.\\
\indent The proof of case (2) is analogous to that of case (1); just replace every occurrence of $\lvert W^{-}(n,\phi^{-1}) \rvert$ in the proof above with $-W^{-}(n,\phi^{-1})$.
\end{proof}
We find the following notation to be convenient, and will use it throughout the remainder of the proof.\\

\indent \textbf{Notation: } Given $\phi \in \Aut(\sigma_{A})$, we define sequences
$$A^{-}(n) = \lvert W^{-}(n,\phi^{-1}) \rvert -W^{-}(n,\phi),$$
$$A^{+}(n) = \lvert W^{-}(n,\phi) \rvert  - W^{-}(n,\phi^{-1})$$
and note that by \eqref{eqn:smolthing-} and \eqref{eqn:smolthing+} these sequences are both non-negative.
\begin{proof}[Proof of Theorem \ref{thm:main}]
For each $1 \le i \le k$, fix some $x^{(i)} \in X_{A}$ such that the edge corresponding to $x^{(i)}_{0}$ ends at state $i$. Let $U_{i}$ denote the 0-beam which consists of the single 0-ray $R(x^{(i)},0)$. Thus $v_{U_{i},0}$ is the $i$th standard basis (row) vector $e_{i}$ in $\mathbb{Z}^{k}$. Using $S_{\phi}$ we define a linear map
\begin{align*}
T_{\phi} \colon \mathbb{C}^{k} \to \mathbb{C}^{k}
\end{align*}
by
$$e_{i} \mapsto S_{\phi}(\delta_{A}^{-k}e_{i}A^{k})$$
(and then extending using linearity). By construction, $\mathcal{G}_{A} \otimes \mathbb{C}$ may be identified with a $T_{\phi}$-invariant subspace of $\mathbb{C}^{k}$ on which the action of $T_{\phi}$ is isomorphic to the map $S_{\phi} \otimes 1 \colon \mathcal{G}_{A} \otimes \mathbb{C} \to \mathcal{G}_{A} \otimes \mathbb{C}$.\\
\indent We claim there exists a constant $K > 0$ such that, for any $1 \le i \le k$,
\begin{equation}\label{eqn:tempehbound}
\lVert T^{n}_{\phi}e_{i} \rVert _{1} \le K \lVert \delta_{A}^{W^{-}(n,\phi^{-1})} \rVert _{1}\lVert v_{\phi^{n}(U_{i}),-W^{-}(n,\phi^{-1})}\rVert _{1}.
\end{equation}
To see this, observe that by \eqref{diagram}, for any $1 \le i \le k$ we have
$$T_{\phi}^{n}e_{i} = S_{\phi}^{n}(\delta_{A}^{-k}e_{i}A^{K})= S_{\phi}^{n} \theta([U_{i}]) = \theta([\phi^{n}(U_{i})]).$$
By Lemma \ref{lemma:rays}, $\phi^{n}(U_{i})$ is a $-W^{-}(n,\phi^{-1})$-beam, and hence
$$\lVert T^{n}_{\phi}e_{i} \rVert _{1}= \lVert \theta([\phi^{n}(U_{i})])\rVert _{1} = \lVert \delta_{A}^{-k+W^{-}(n,\phi^{-1})}v_{\phi^{n}(U_{i}),-W^{-}(n,\phi^{-1})}A^{k}\rVert _{1}$$
$$\le K \lVert \delta_{A}^{W^{-}(n,\phi^{-1})}v_{\phi^{n}(U_{i}),-W^{-}(n,\phi^{-1})}\rVert _{1}  \le K \lVert \delta_{A}^{W^{-}(n,\phi^{-1})}\rVert _{1}\lVert v_{\phi^{n}(U_{i}),-W^{-}(n,\phi^{-1})}\rVert _{1},$$
giving \eqref{eqn:tempehbound}. Note that by Lemma \ref{lemma:rays}, given $n \in \mathbb{N}$, for any $1 \le i \le k$ we have the following:
\begin{equation}\label{eqn:onesnek}
\textnormal{ if } W^{-}(n,\phi^{-1}) \le 0, \textnormal{ then } \lVert v_{\phi^{n}(U_{i}),-W^{-}(n,\phi^{-1})} \rVert _{1} \le \mathcal{P}_{X_{A}}(A^{-}(n)).
\end{equation}
\begin{equation}\label{eqn:twosnek}
\textnormal{ if } W^{-}(n,\phi^{-1}) \ge 0, \textnormal{ then } \lVert v_{\phi^{n}(U_{i}),-W^{-}(n,\phi^{-1})} \rVert_{1} \le \mathcal{P}_{X_{A}}(A^{+}(n)).
\end{equation}
When $W^{-}(n,\phi^{-1}) \ge 0$, it follows from \eqref{eqn:smolthing-} that we must have $A^{+}(n) \le A^{-}(n)$, which gives $\mathcal{P}_{X_{A}}(A^{+}(n)) \le \mathcal{P}_{X_{A}}(A^{-}(n))$ in this case. From this, together with \eqref{eqn:tempehbound}, \eqref{eqn:onesnek} and \eqref{eqn:twosnek} we get
\begin{equation}\label{eqn:greensnek}
\textnormal{ if } W^{-}(n,\phi^{-1}) \ge 0, \textnormal{ then } \lVert T^{n}_{\phi} \rVert _{1} \le K \lVert \delta_{A}^{W^{-}(n,\phi^{-1})} \rVert_{1} \mathcal{P}_{X_{A}}(A^{+}(n)).
\end{equation}
\begin{equation}\label{eqn:bluesnek}
\textnormal{ for any } n \in \mathbb{N}, \hspace{.11in}  \lVert T^{n}_{\phi} \rVert_{1} \le K \lVert \delta_{A}^{W^{-}(n,\phi^{-1})}\rVert _{1} \mathcal{P}_{X_{A}}(A^{-}(n)).
\end{equation}
We now proceed by cases.\\
\indent \emph{Case 1: $\alpha^{-}(\phi^{-1}) > 0$.}\\
\indent In this case, for sufficiently large $n$ we have $W^{-}(n,\phi^{-1}) > 0$, and it follows from \eqref{eqn:bluesnek} that
$$\lVert T^{n}_{\phi} \rVert_{1} \le K \lVert \delta_{A}^{W^{-}(n,\phi^{-1})}\rVert _{1} \mathcal{P}_{X_{A}}(A^{+}(n)),$$
and
$$ \frac{1}{n} \log \lVert T_{\phi}^{n} \rVert _{1} \le \frac{1}{n} \log K + \frac{1}{n} \log \lVert \delta_{A}^{W^{-}(n,\phi^{-1})}\rVert _{1} + \frac{1}{n} \log  \mathcal{P}_{X_{A}}(A^{+}(n)).$$
By Gelfand's Formula \cite[Section 17.1]{LaxBook} we have
$$\lim_{n \to \infty} \frac{1}{n}\log \lVert T_{\phi}^{n} \rVert _{1} = \log \rho(T_{\phi}),$$
so it suffices to consider the two non-trivial terms on the right hand side,
$$\lim_{n \to \infty} \frac{1}{n} \log \lVert \delta_{A}^{W^{-}(n,\phi^{-1})}\rVert _{1}, \hspace{.3in} \lim_{n \to \infty} \frac{1}{n} \log  \mathcal{P}_{X_{A}}(A^{+}(n)).$$
For the first term, we observe that
\begin{equation}\label{eqn:thesmalldeer}
\lim_{n \to \infty} \frac{1}{n} \log \lVert \delta_{A}^{W^{-}(n,\phi^{-1})}\rVert _{1} = \alpha^{-}(\phi^{-1}) \log \rho(\delta_{A}) = \alpha^{-}(\phi^{-1})h_{top}(\sigma_{A})
\end{equation}

For the second term, we first recall that (see \cite[Sec. 6.3]{LindMarcus1995})
\begin{equation}\label{eqn:entswalking}
\lim_{n \to \infty}\frac{\textnormal{log}\mathcal{P}_{X_{A}}(n)}{n} = h_{top}(\sigma_{A}).
\end{equation}
\indent We wish to show that
\begin{equation}\label{eqn:somebound}
\lim_{n \to \infty} \frac{1}{n} \textnormal{log} \mathcal{P}_{X_{A}}(A^{+}(n)) = \left[ \lvert \alpha^{-}(\phi) \rvert -\alpha^{-}(\phi^{-1})\right]h_{top}(\sigma_{A}).
\end{equation}
If $\lvert \alpha^{-}(\phi) \rvert = \alpha^{-}(\phi^{-1})$ then this holds, since, noting that
$$\lim_{n \to \infty}\frac{1}{n}A^{+}(n) = \lvert \alpha^{-}(\phi) \rvert -\alpha^{-}(\phi^{-1})$$
the left hand side of \eqref{eqn:somebound} is then also zero. On the other hand, if $\lvert \alpha^{-}(\phi) \rvert \ne \alpha^{-}(\phi^{-1})$ then we must have $\lvert \alpha^{-}(\phi) \rvert > \alpha^{-}(\phi^{-1})$ (by \eqref{eqn:smolthing-}), and hence $A^{+}(n) \to \infty$, so

$$\lim_{n \to \infty}\frac{1}{n}\textnormal{log}\mathcal{P}_{X_{A}}(A^{+}(n))$$
$$= \lim_{n \to \infty} \left( \frac{A^{+}(n)}{n} \right) \left( \frac{\textnormal{log} \mathcal{P}_{X_{A}}(A^{+}(n))}{A^{+}(n)} \right)$$
$$=\left[\lvert \alpha^{-}(\phi) \rvert -\alpha^{-}(\phi^{-1})\right]h_{top}(\sigma_{A}).$$
Putting \eqref{eqn:somebound} and \eqref{eqn:thesmalldeer} together gives
$$\log \rho(T_{\phi}) \le \lvert \alpha^{-}(\phi) \rvert h_{top}(\phi)$$
which completes Case 1.\\

\emph{Case 2: $\alpha^{-}(\phi^{-1}) \le 0.$}\\
First we note that in this case, we must have $W^{-}(n,\phi^{-1}) \le 0 $ for all $n$. Indeed, the sequence $W^{-}(n,\phi^{-1})$ is super-additive (see \cite[Lemma 3.10]{CFKSpacetime}), and by Fekete's Lemma $\alpha^{-}(\phi^{-1}) = \underset{n}\sup \frac{W^{-}(n,\phi^{-1})}{n} $. The case then proceeds similarly to the previous. From \eqref{eqn:bluesnek} we have that
$$\lVert T^{n}_{\phi} \rVert _{1} \le K \lVert \delta_{A}^{W^{-}(n,\phi^{-1})} \rVert _{1} \mathcal{P}_{X_{A}}(A^{-}(n)),$$
and
$$ \frac{1}{n} \log \lVert T_{\phi}^{n} \rVert _{1} \le \frac{1}{n} \log K + \frac{1}{n} \log \lVert \delta_{A}^{W^{-}(n,\phi^{-1})} \rVert _{1} + \frac{1}{n} \log  \mathcal{P}_{X_{A}}(A^{-}(n)).$$
We again consider the two non-trivial terms on the right hand side,
$$\lim_{n \to \infty} \frac{1}{n} \log \lVert \delta_{A}^{W^{-}(n,\phi^{-1})} \rVert _{1}, \hspace{.3in} \lim_{n \to \infty} \frac{1}{n} \log  \mathcal{P}_{X_{A}}(A^{-}(n)).$$
For the first term, we claim that
\begin{equation}\label{eqn:snekfrends}
\lim_{n \to \infty} \frac{1}{n} \log \lVert \delta_{A}^{W^{-}(n,\phi^{-1})} \rVert _{1} = -\alpha^{-}(\phi^{-1})\log \rho(\delta_{A}^{-1}) = \lvert \alpha^{-}(\phi^{-1}) \rvert \log \rho(\delta_{A}^{-1}).
\end{equation}
If $\alpha^{-}(\phi^{-1}) < 0$, this is clear from Gelfand's formula. If $\alpha^{-}(\phi^{-1}) = 0$, it also follows from Gelfand's formula, since then
$$\lim_{n \to \infty} \frac{1}{n} \log \lVert \delta_{A}^{W^{-}(n,\phi^{-1})} \rVert = 0$$
and both sides are zero.

The second term is analogous to \eqref{eqn:somebound}, and we get
\begin{equation}\label{eqn:squirrelo}
\lim_{n \to \infty} \frac{1}{n} \log  \mathcal{P}_{X_{A}}(A^{-}(n)) = \left[\lvert \alpha^{-}(\phi^{-1}) \rvert - \alpha^{-}(\phi)\right]h_{hop}(\sigma_{A}).
\end{equation}
Putting together \eqref{eqn:snekfrends} and \eqref{eqn:squirrelo} gives
$$\log \rho(T_{\phi}) \le \left[\lvert \alpha^{-}(\phi^{-1})\rvert - \alpha^{-}(\phi)\right]h_{hop}(\sigma_{A}) + \lvert \alpha^{-}(\phi^{-1}) \rvert \log \rho(\delta_{A}^{-1}).$$
which completes Case 2.


\end{proof}
This concludes the proof of Theorem \ref{thm:main}.\\



\section{Distortion}
For a countable group $G$, we call an element $g \in G$ \emph{group distorted} if there exists some finite set $F \subset G$ such that
$$\lim_{n \to \infty} \frac{L_{F}(g^{n})}{n} = 0$$
where $L_{F}(g^{k})$ denotes the length (in the word metric) of the shortest presentation of $g^{k}$ in the subgroup $\langle F \rangle$ generated by $F$ in $G$. While any element of finite order is necessarily group distorted, there exists groups which contain group distorted elements of infinite order. For example, the groups $\textnormal{SL}(k,\mathbb{Z})$ for $k \ge 3$ contain group distorted elements of infinite order \cite{LMR2000}. It is not hard to show (see \cite[Prop 3.4]{CFKPDistortion} for example) that if $\phi \in \Aut(\sigma)$ is group distorted, then it is range distorted. The following question was asked in \cite{CFKPDistortion}:\\

\textbf{Question 5.3 in \cite{CFKPDistortion}: } Does a group having group distorted elements of infinite order embed into the automorphism group of some positive entropy subshift?\\

The dimension representation of group distorted elements in $\Aut(\sigma_{A})$ must have spectrum on the unit circle. Indeed, since group distorted elements in $\Aut(\sigma_{A})$ are necessarily range distorted (see \cite[Prop. 3.4]{CFKPDistortion}), this can be deduced from Theorem \ref{thm:unitcircle}. However, we give a direct proof below which is more elementary.
\begin{proposition}\label{prop:distortedinertness}
If $\phi \in \Aut(\sigma_{A})$ is group distorted with respect to a finite generating set $F \subset \Aut(\sigma_{A})$, then $\log \rho(\pi_{A}(\phi)) = 0$.
\end{proposition}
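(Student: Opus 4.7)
The plan is to bound $\log\rho(\pi_A(\phi))$ above by the exponential growth rate of the word length $L_F(\phi^n)$ via Gelfand's formula, and then apply the same bound to $\phi^{-1}$ to force every eigenvalue of $\pi_A(\phi)$ onto the unit circle.

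Recall from the end of Section \ref{sec:dimensiongroupdef} that $\pi_A(\phi)$ extends uniquely to a linear automorphism of the finite-dimensional vector space $R(A)\otimes \mathbb{C}$, and that $\pi_A$ is a group homomorphism into $\GL(R(A)\otimes \mathbb{C})$. Fix any submultiplicative norm $\|\cdot\|$ on $\End(R(A)\otimes \mathbb{C})$, assume without loss of generality that the finite generating set $F$ is symmetric (replace $F$ by $F\cup F^{-1}$ if necessary), and set $M = \max_{f\in F}\|\pi_A(f)\|$.

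For each $n\ge 1$, express $\phi^n$ as a word of length $L_F(\phi^n)$ in the letters of $F$. Applying $\pi_A$ and using submultiplicativity of $\|\cdot\|$ yields
$$\|\pi_A(\phi)^n\| \;=\; \|\pi_A(\phi^n)\| \;\le\; M^{L_F(\phi^n)}.$$
By Gelfand's formula together with the hypothesis $L_F(\phi^n)/n\to 0$,
$$\log\rho(\pi_A(\phi)) \;=\; \lim_{n\to\infty}\frac{1}{n}\log\|\pi_A(\phi)^n\| \;\le\; \Bigl(\lim_{n\to\infty}\frac{L_F(\phi^n)}{n}\Bigr)\log M \;=\; 0.$$

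Next I would observe that $\phi^{-1}$ is also group distorted with respect to $F$, since inverting and reversing a minimal word for $\phi^n$ produces a word of the same length for $\phi^{-n}$ (using $F = F^{-1}$), so $L_F(\phi^{-n}) = L_F(\phi^n)$. The same argument applied to $\phi^{-1}$ then gives $\log \rho(\pi_A(\phi^{-1})) \le 0$. Since $\pi_A(\phi^{-1}) = \pi_A(\phi)^{-1}$, any eigenvalue $\mu$ of $\pi_A(\phi)$ satisfies both $|\mu|\le 1$ and $|\mu^{-1}|\le 1$, forcing $|\mu| = 1$. Hence $\rho(\pi_A(\phi)) = 1$ and $\log\rho(\pi_A(\phi)) = 0$.

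No real obstacle arises; the argument is a textbook use of Gelfand's formula, and the only point requiring care is to apply the estimate symmetrically to $\phi$ and $\phi^{-1}$ in order to upgrade the upper bound $\rho(\pi_A(\phi))\le 1$ to the desired equality.
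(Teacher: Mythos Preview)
Your proof is correct and follows essentially the same route as the paper: bound $\|\pi_A(\phi)^n\|$ by $M^{L_F(\phi^n)}$ via submultiplicativity, apply Gelfand's formula to get $\log\rho(\pi_A(\phi))\le 0$, and then repeat for $\phi^{-1}$. The only cosmetic difference is that the paper absorbs the inverses into the constant $M=\max\{\|T_i\|,\|T_i^{-1}\|\}$ rather than first symmetrizing $F$.
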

\begin{proof}
If $\phi \in \Aut(\sigma_{A})$ is group distorted, then its image $S_{\phi} = \pi_{A}(\phi)$ under the dimension representation must also be group distorted. Choosing generators $\{T_{i}\}_{i=1}^{k}$ for $\pi_{A}(F)$, for any $n \in \mathbb{N}$ there exists $l(n)$ for which $S_{\phi}^{n}$ is a product of $l(n)$ of matrices of the form $T_{i}^{\epsilon}$, where $\epsilon = \pm 1$, and $\frac{l(n)}{n} \to 0$. If $M = \max\{\lVert T_{i} \rVert , \lVert T_{i} \rVert ^{-1}\}_{i=1}^{k}$, then
$$\lVert S_{\phi}^{n} \rVert \le M^{l(n)},$$
and hence
$$\log \rho (S_{\phi}) = \lim_{n \to \infty} \frac{1}{n} \log \lVert S_{\phi}^{n} \rVert \le \lim_{n \to \infty} \frac{l(n)}{n} \log M = 0.$$
Applying the above to $\phi^{-1}$ as well gives $\log \rho (S_{\phi}) = 0$.
\end{proof}
While group distorted elements in $\Aut(\sigma_{A})$ are necessarily range distorted, to the author's knowledge, it is not known whether $\phi \in \Aut(\sigma_{A})$ being range distorted implies $\phi$ must be group distorted. A consequence of Theorem \ref{thm:unitcircle} is that if $\phi$ is range distorted then we must still have $\log \rho(\pi_{A}(\phi)) = 0$.\\

\bibliographystyle{plain}
\bibliography{ssbib}

\end{document}